\theoremstyle{plain}
\newtheorem{teo}{Theorem}
\newtheorem{prop}[teo]{Proposition}
\theoremstyle{definition}
\newtheorem{ese}[teo]{Example}
\theoremstyle{remark}
\newtheorem{oss}[teo]{Remark}
\newcommand{\rp}[1]{\ensuremath{\mathbb{RP}^{#1}}}
\newcommand{\s}[1]{\ensuremath{\mathbf{S}^{#1}}}
\begin{document}

\title{Lift in the $3$-sphere \\ of knots and links in lens spaces\footnote{Work supported by the University of Bologna, Department of Mathematics and Marco Polo funds for foreign research periods.}}

\author{Enrico Manfredi}

\maketitle

\begin{abstract}

An important geometric invariant of links in lens spaces is the lift in $\s3$ of a link $L \subset L(p,q)$, that is the counterimage $\widetilde L$ of $L$ under the universal covering of $L(p,q)$. 
If lens spaces are defined as a lens with suitable boundary identifications, then a link in $L(p,q)$ can be represented by a disk diagram, that is to say, a regular projection of the link on a disk. 
Starting from a disk diagram of $L$,
we obtain a diagram of the lift $\widetilde L$ in $\s3$. 
With this construction we are able to find different knots and links in $L(p,q)$ having equivalent lifts, that is to say, we cannot distinguish different links in lens spaces only from their lift. 
\\ {{\it Mathematics Subject
Classification 2010:} Primary 57M25, 57M27; Secondary 57M10.\\
{\it Keywords:} knots/links, lens spaces, disk diagram, lift, covering.}\\

\end{abstract}

\begin{section}{Introduction}

The study of knots and links in the $3$-sphere is a widespread branch of mathematics. What happens for knots and links in other $3$-manifolds? Dehn surgery and mixed link diagrams are useful to represent any link $L$ in a $3$-manifold $M$, however there is not a good skein theory.
Different representations become really useful if we restrict to a particular class of closed $3$-manifolds, the lens spaces $L(p,q)$: several interesting results are shown in \cite{BG, BGH, Co, CM}.
Other results for $\rp3 \cong L(2,1)$ \cite{D, HL, G1, G2} and then for the general case \cite{CMM} are due to the particular representation on which we will focus.
Namely, if we consider the lens space $L(p,q)$ as the quotient of the unit ball $B^{3}$ where each boundary point is identified with the one in the opposite hemisphere after a planar reflection and a counterclockwise rotation of $2 \pi q /p$ radians around the polar axis, then we can project any link on the equatorial disk of $B^{3}$, obtaining a regular diagram for it, named disk diagram. 

In \cite{BG}, Baker and Grigsby consider a geometric invariant that could be really useful: given a link $L$ in $L(p,q)$, and assigned the cyclic covering map $P \colon \s3 \rightarrow L(p,q)$, the lift $\widetilde L$ of $L$ is the counterimage $P^{-1}(L)\subset \s3$. 
They produce a grid diagram for the lift but this representation cannot give much information about the properties of the invariant. For this reason we develop a geometric representation that, with the help of a link braid form, allows us to answer the following fundamental question: “Is the lift a complete invariant?”

The lift in $\s3$ of a link in $L(p,q)$ is exactly a $(p,q)$-lens link of Chbili \cite{C4}, and hence a freely periodic link in the 3-sphere \cite{H}. Our question can be re-phrased: “Are there links in $\s3$ that are freely periodic with respect to two different $(p,q)$-periodic transformations?”

For unoriented links up to ambient isotopy, the answer is negative: the lift is not a complete invariant. We construct several counterexamples, consisting of:
\begin{description} 
\item[\textmd{a)}] two non-equivalent knots in $L(p,\frac{p \pm 1}{2})$, $p>3$ and odd, with different homology class that are lifted both to the unknot;
\item[\textmd{b)}] a knot and a 2-component link in $L(4,1)$ that are lifted to the Hopf link;
\item[\textmd{c)}] an infinite family of cablings of b) that still have equivalent lift; the pairs of links may have a different number of components; in some cases they have the same number of components and the same homology class, we then find an example in which the pair has different Alexander polynomials.
\end{description}

Another important advantage of a diagram for the lift is a method to compute the fundamental quandle of links in lens spaces. The fundamental quandle of a link in a $3$-manifold is a geometric invariant that can be explicitly computed on a diagram only for links in $\s3$ \cite{J,M} and in $\rp3$ \cite{G2}.
Since the fundamental quandle of $L \subset L(p,q)$ is isomorphic to the fundamental quandle of its lift $\widetilde L$ \cite{M,FR}, we are able to compute it on the lift diagram. For the same reason, we know that the fundamental quandle cannot classify knots/links in lens spaces.

The paper is organized as follows.
In Section 2 we explain how to get a classical diagram in $\s3$ of the lift starting from the diagram of $L\subset L(p,q)$ defined in \cite{CMM}, and we show the connection with $(p,q)$-lens links. 
In Section 3 we show some interesting examples for split links, composite knots, cable links, then we develop a braid form that describes a subclass of links in lens spaces. In Section 4, exploiting this link braid form, we are able to find the examples a), b) and c), consisting of different links with equivalent lift, that is to say, the lift is not a complete invariant. At last, the case of oriented and diffeomorphic links is taken into account.

\end{section}


\begin{section}{Lift of links in lens spaces}

The results stated in this paper hold both in the \emph{Diff} category and in the \emph{PL} category, as well as in the \emph{Top} category if we consider only tame links.
In this section we recall the notion of disk diagram for a link in a lens space developed in \cite{GM} and \cite{CMM}, then we show how to get a planar diagram for the lift in $\s3$ of links in lens space.

\subsection{Two models for lens spaces}\label{sub1}
Let $p$ and $q$ be two coprime integers such that $ 0 \leqslant q < p$. The unit ball is the set $B^{3} = \{(x_{1},x_{2},x_{3}) \in \mathbb{R}^{3} \ | \ x_{1}^{2}+x_{2}^{2}+x_{3}^{2}\leqslant1\}$ and $E_{+}$ and $E_{-}$ are respectively the upper and the lower closed hemisphere of $\partial B^{3}$. The equatorial disk $B^{2}_{0}$ is defined by the intersection of the plane $x_{3}=0$ with $B^{3}$. Label with $N$ and $S$ respectively the "north pole" $(0,0,1)$ and the "south pole" $(0,0,-1)$ of $B^{3}$.
Let \mbox{$g_{p,q} \colon E_{+} \rightarrow E_{+}$} be the counterclockwise rotation of $2 \pi q /p$ radians around the $x_{3}$-axis, as in Figure~\ref{L(p,q)}, and let \hbox{$f_{3} \colon E_{+} \rightarrow E_{-}$} be the reflection with respect to the plane $x_{3}=0$.

\begin{figure}[h!]                      
\begin{center}                         
\includegraphics[width=8.4cm]{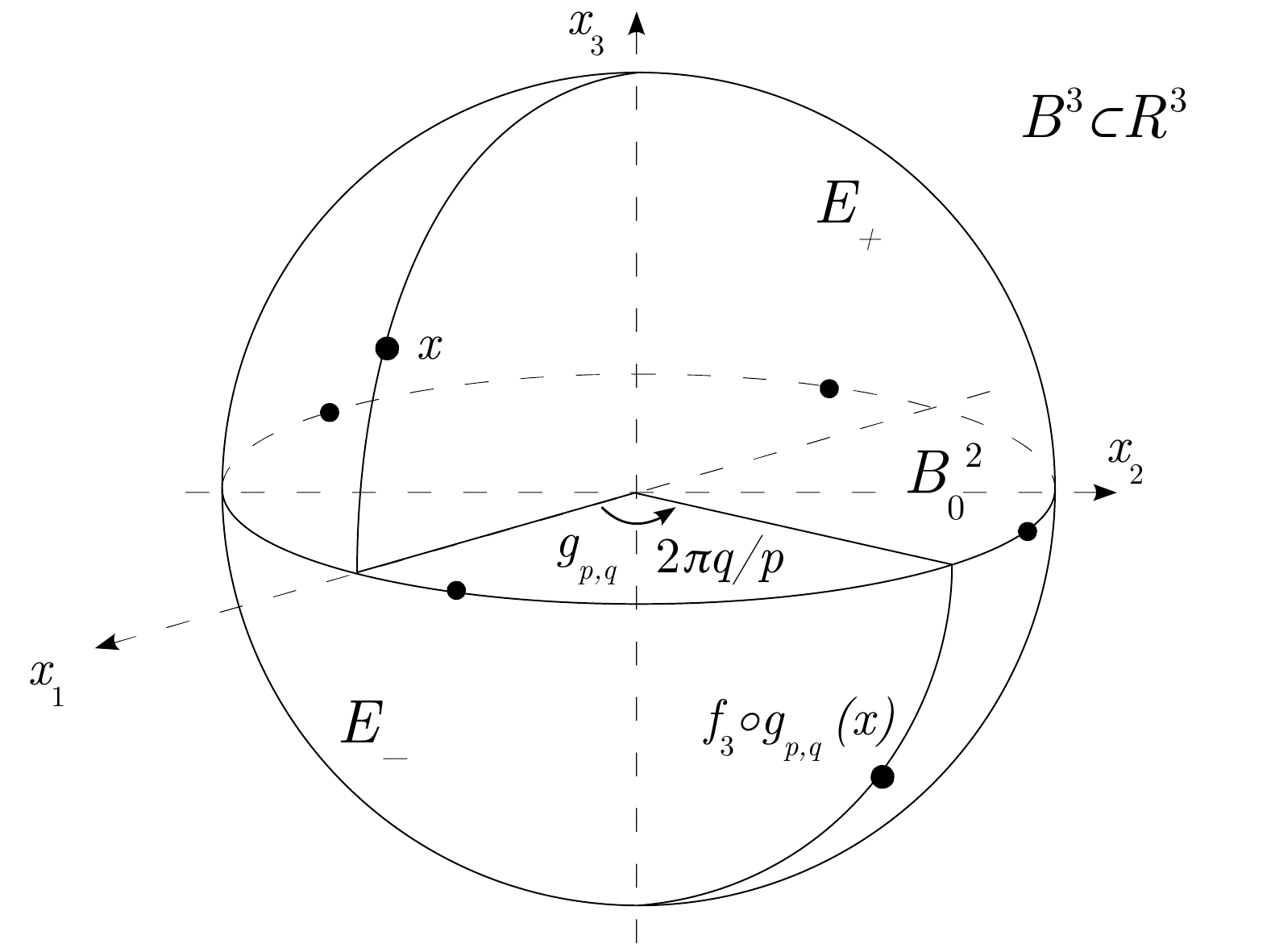}
\caption[legenda elenco figure]{Representation of $L(p,q)$.}\label{L(p,q)}
\end{center}
\end{figure}

The \emph{lens space} $L(p,q)$ is the quotient of $B^{3}$ by the equivalence relation on $\partial B^{3}$ which identifies $x \in E_{+}$ with $f_{3} \circ g_{p,q} (x) \in E_{-}$. The quotient map is denoted by \mbox{$F \colon B^{3} \rightarrow L(p,q)=B^{3} / \sim$}. Note that on the equator \mbox{$\partial B^{2}_{0}=E_{+} \cap E_{-}$} each equivalence class contains $p$ points, instead of the two points contained in equivalence classes outside the equator. The first example is $L(1,0)\cong \s{3}$ and the second example is $L(2,1) \cong \rp{3}$, where the construction gives the usual model of the projective space: opposite points on $\partial B^3$ are identified.


Another classical model for the lens space is the following:
consider $\s{3}$ as the join of two copies of $\s1$ (in a Hopf link configuration), put on it the action corresponding to the rotation of $2 \pi /p$ radians of the first circle and of $2 \pi q /p$ radians of the second one, according to Figure \ref{torusmodel}. Denote with $G_{p,q}$ the cyclic group generated by this action. Clearly $G_{p,q}$ is isomorphic to $ \mathbb{Z}_{p}$ and it acts without any fixed point, in a properly discontinuous way on $\s{3}$. Therefore the quotient space of $\s{3}$ is a $3$-manifold that indeed results to be the lens space $L(p,q)$. Denote with $P \colon \s3 \rightarrow L(p,q)$ the quotient map.

\begin{figure}[h!]                      
\begin{center}                         
\includegraphics[width=11.5cm]{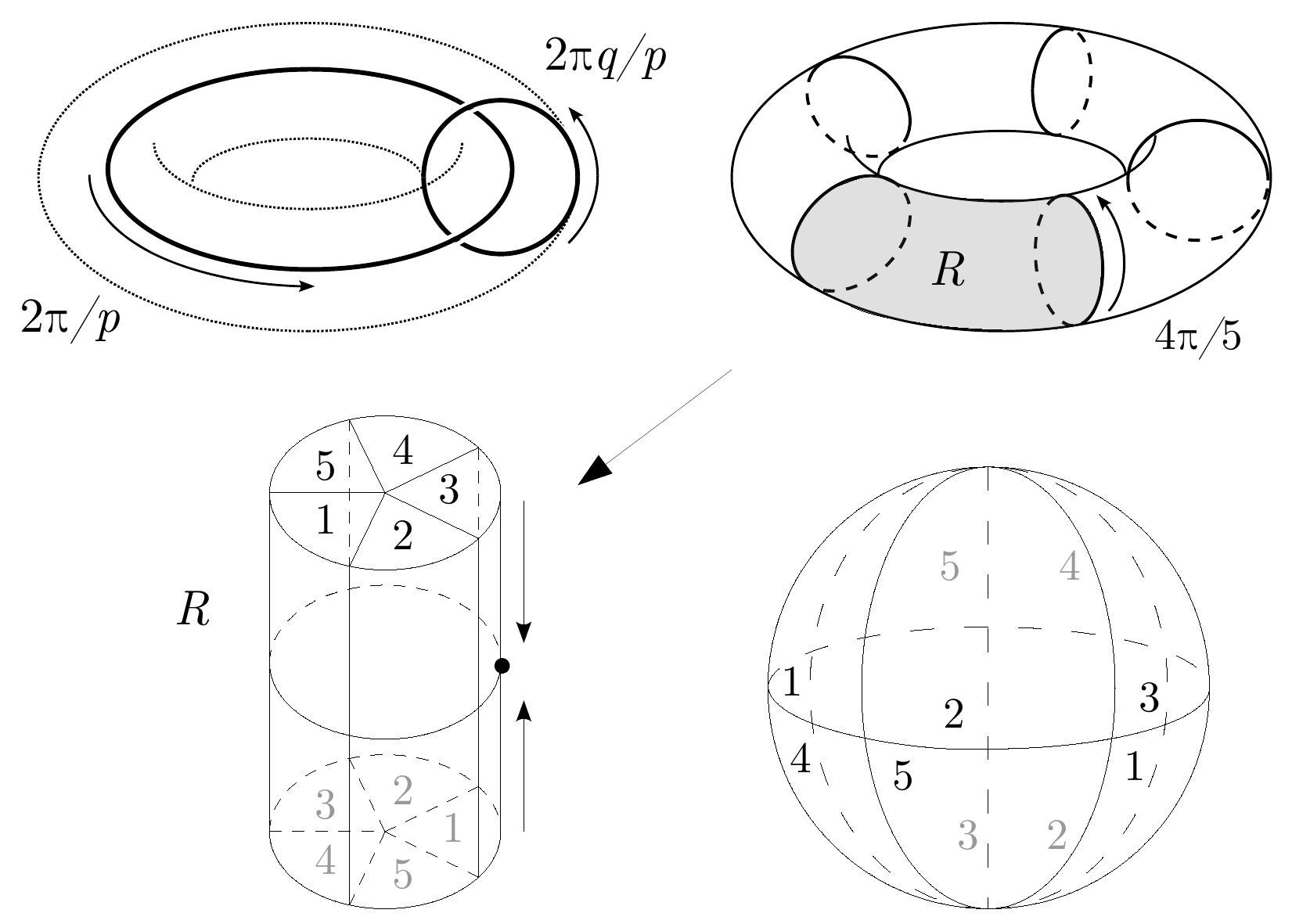}
\caption[legenda elenco figure]{Lens space $L(5,2)$ from the solid torus model of $\s3$.}\label{torusmodel}
\end{center}
\end{figure}

The proof of the equivalence of these two constructions can be found in \cite{WA}, and since it is relevant for our purpose, we can recall it briefly here.
The construction of $\s3$ as the join of two circles is the following: $\s3 = \s1 \times \s1 \times [0,1] / \sim_{J}$, where $\sim_{J}$ is the equivalence relation defined by $(a_{1},b,0) \sim_{J} (a_{2},b,0) $ for all $ a_{1}, a_{2} \in \s1, b \in \s1$ and $(a,b_{1},1) \sim_{J} (a,b_{2},1) $ for all $a \in \s1, b_{1}, b_{2} \in \s1$. It is essential to visualize the two circles in a Hopf configuration. Let $D^{2}= \{ z \in \mathbb{C} \ | \ ||z||≤1 \}$ be the unitary disk. This model of $\s3$ is equivalent to the following: considering the solid torus $\s1 \times D^{2}$, for each $Q \in \s1=\partial D^2$, each parallel $\s1 \times \{ Q \}$ of the torus $\s1 \times \partial D^{2}$ collapses to the point $Q$. Under this equivalence, the first circle of the join can be thought of as $\s1 \times \{ 0 \}$ while the second circle can be thought of as $\{ P \} \times \partial D^{2}/ \sim _{J}$, with $ P  \in \s1$.

The effect of the action of $G_{p,q}$ on this model of $\s3$ is the following: the circle $\mathfrak{l}= \s1 \times \{ 0 \}$ of the solid torus is rotated by $2 \pi / p$ radians, thus we identify $p$ equidistant copies of a meridian disk. The second $\s1$, visualized as a meridian $\mathfrak{m} = \{P \} \times \partial D^{2}$ of the torus, is rotated by $2 \pi q/ p$ radians, thus each of the $p$ copies of the meridian disk is identified with a rotation of $2 \pi q/ p$ radians.

As Figure \ref{torusmodel} shows, a fundamental domain under this action is a cylinder $R=[0,1] \times D^{2}$ with identification on the boundary, precisely each segment $ [0,1] \times \{ Q \}$ (with $Q \in \partial D^{2}$) of the lateral surface collapses to the point $\{ 1/2 \}$, and the top and the bottom disks are identified with each other after a rotation of $2 \pi q/p$ radians; in this way we can recognize the first model of the lens space.

\subsection{The construction of the disk diagram}

In this paper all links in the lens space $L(p,q)$ are considered up to ambient isotopy and up to link's orientation. Since we are not interested in the case of $\s3$, we assume $p>1$. The definition of the disk diagram developed in \cite{CMM} is the following.

Let $L$ be a link in $L(p,q)$ and consider $L'=F^{-1}(L)$. By moving $L$ via a small isotopy in $L(p,q)$, we can suppose that:
\begin{enumerate} \itemsep-4pt
\item[i)] $L'$ does not meet the poles $N$ and $S$ of $B^{3}$;
\item[ii)] $L' \cap \partial B^{3}$ consists of a finite set of points;
\item[iii)] $L'$ is not tangent to $\partial B^3$;
\item[iv)] $L' \cap \partial B^{2}_{0} = \emptyset$.
\end{enumerate}

As a consequence, $L'$ is the disjoint union of closed curves in $\text{int} B^{3}$ and arcs properly embedded in $ B^{3}$.
Let $\mathbf{p} \colon B^{3} \smallsetminus \{ N,S \} \rightarrow B^{2}_{0}$ be the projection defined by $\mathbf{p}(x)=c(x) \cap B^{2}_{0}$, where $c(x)$ is the circle (possibly a line) through $N$, $x$ and $S$.
Take $L'$ and project it using $\mathbf{p}_{|L'} \colon L' \rightarrow B^{2}_{0}$. 
As in the classical link projection, taken a point $P \in \mathbf{p}(L')$, its counterimage $\mathbf{p}^{-1}(P)$ in $L'$ may contain more than one element; in this case we say that $P$ is either a \emph{double} or \emph{multiple} point.

We can assume, by moving $L$ via a small isotopy, that the projection $\mathbf{p}_{|L'} \colon L' \rightarrow B^{2}_{0}$ of $L$ is \emph{regular}, namely:
\begin{enumerate} \itemsep-4pt
\item[1)] the projection of $L'$ contains no cusps;
\item[2)] all auto-intersections of $\mathbf{p}(L')$ are transversal;
\item[3)] the set of multiple points is finite, and all of them are actually double points;
\item[4)] no double point is on $\partial B^{2}_{0}$.
\end{enumerate}

Finally, double points are resolved by underpasses and overpasses as in the diagram for links in $\s3$.
A \emph{disk diagram} of a link $L$ in $L(p,q)$ is a regular projection of $L'=F^{-1}(L)$ on the equatorial disk $B^{2}_{0}$, with specified overpasses and underpasses.

\begin{figure}[h!]                      
\begin{center}                         
\includegraphics[width=11.2cm]{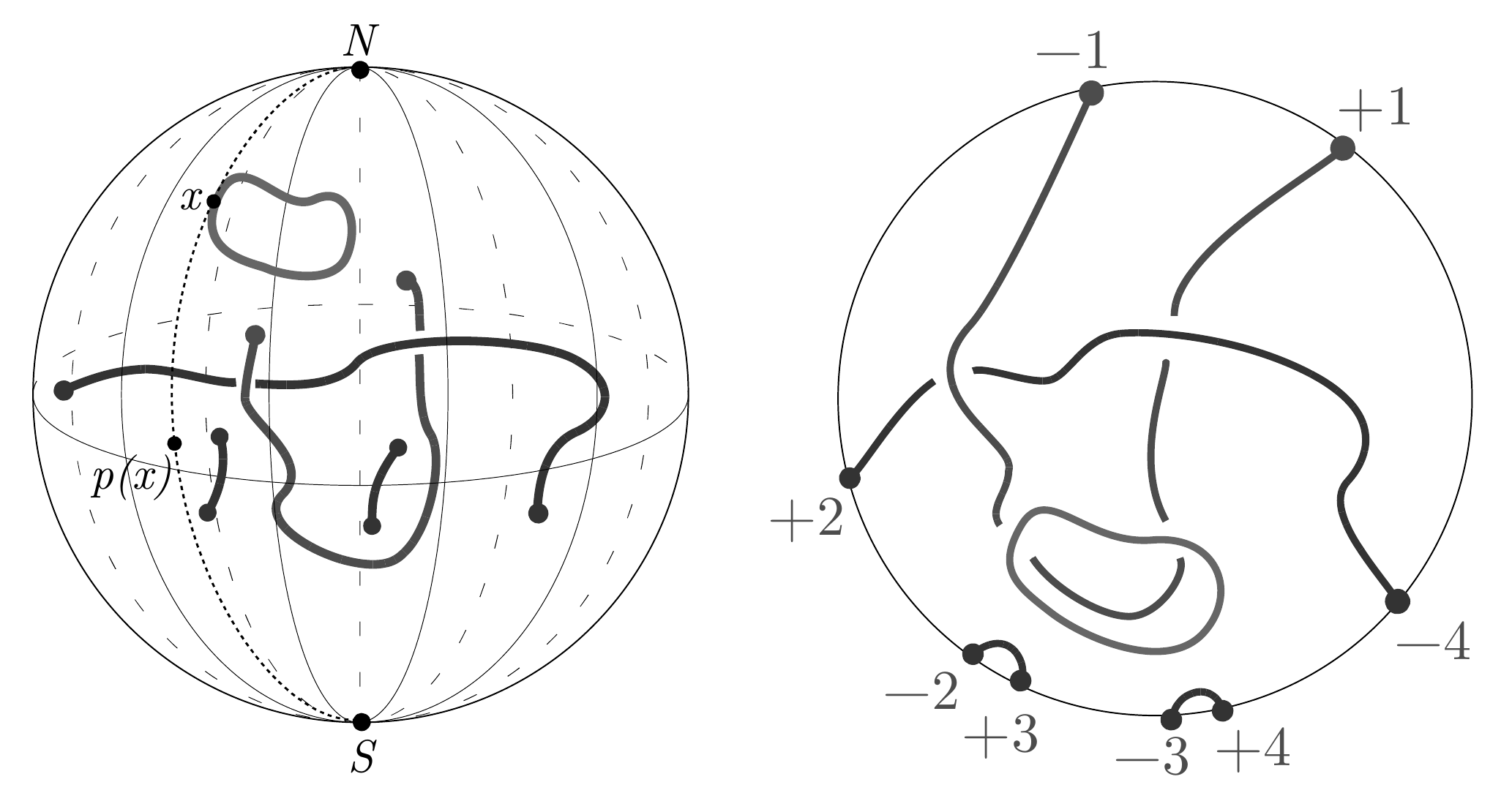}
\caption[legenda elenco figure]{A link in $L(9,1)$ and its corresponding disk diagram.}\label{link3}
\end{center}
\end{figure}

In order to have a more comprehensible diagram, we index the boundary points of the projection as follows: first, we assume that the equator $\partial B^{2}_{0}$ is oriented counterclockwise if we look at it from $N$, then, according to this orientation, we label with $+1, \ldots, +t$ the endpoints of the projection of the link coming from the upper hemisphere, and with $-1, \ldots, -t$ the endpoints coming from the lower hemisphere, respecting the rule $+i \sim -i$. An example is shown in Figure~\ref{link3}.

In \cite{CMM} it is shown that two disk diagrams of links in lens space represent equivalent links if and only if they are connected by a finite sequence of the seven Reidemeister type moves illustrated in Figure \ref{R17}.

\begin{figure}[h!]                      
\begin{center}                         
\includegraphics[width=14cm]{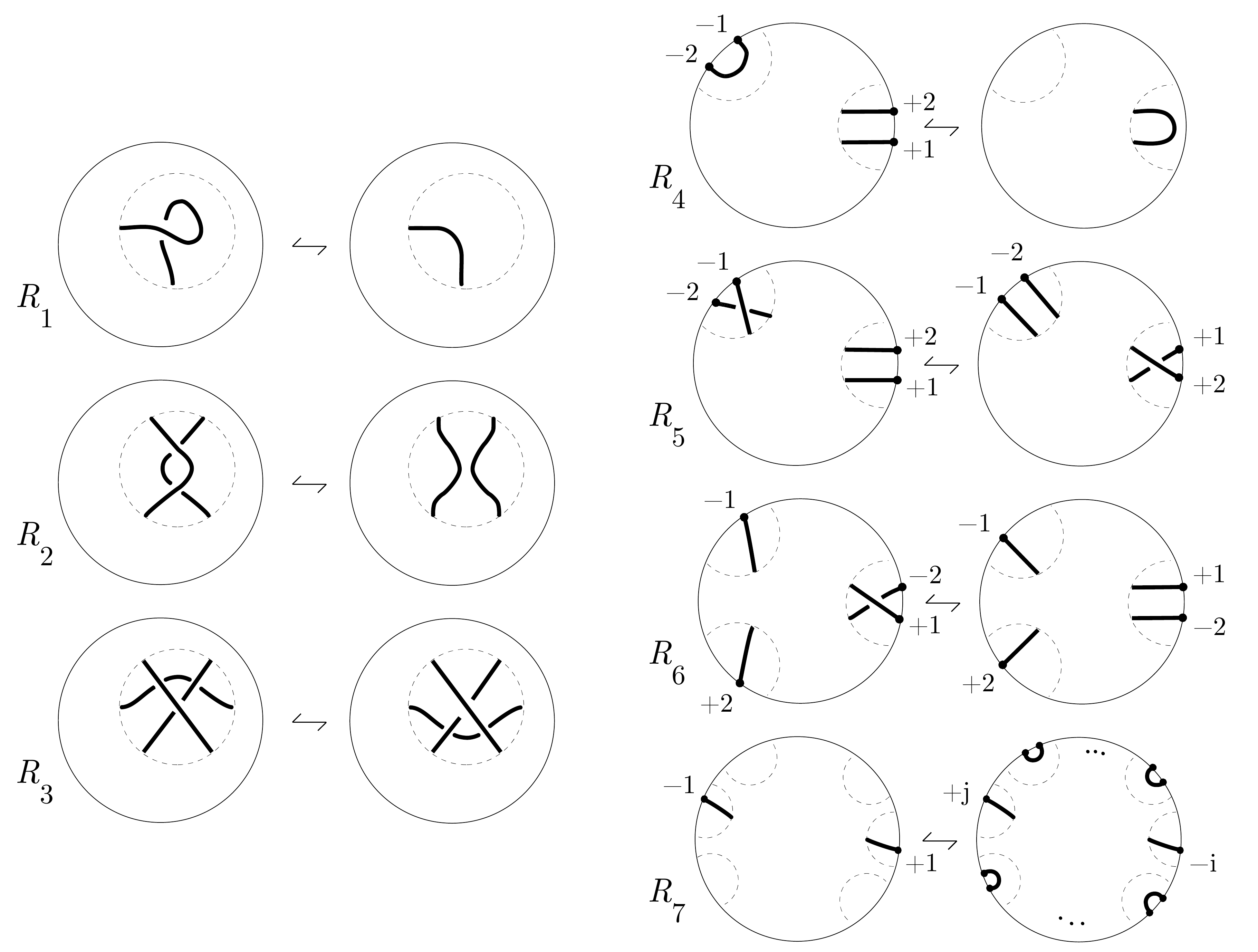}
\caption[legenda elenco figure]{Generalized Reidemeister moves.}\label{R17}
\end{center}
\end{figure}

A disk diagram is defined \emph{standard} if the labels on its boundary points, read according to the orientation on $\partial B^{2}_{0}$, are $(+1, \ldots, +t, -1, \ldots, -t)$. 

\begin{prop}\label{standard}
Every disk diagram can be reduced to a standard disk diagram with some small isotopies:
if $p=2$, the signs of its boundary points can be exchanged;
if $p>2$, a finite sequence of $R_{6}$ moves can be applied in order to bring all the plus-type boundary points aside.
\end{prop}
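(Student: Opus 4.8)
The plan is to reduce the notion of "standard" to a purely combinatorial condition on the cyclic positions of the boundary points of the diagram, and then to realize that condition by moves. First I record the relevant data. After the small isotopies making the projection regular, $L'=F^{-1}(L)$ meets $\partial B^3$ in $2t$ points, so the projection has $2t$ endpoints on $\partial B_0^2$, which we traverse counterclockwise as seen from $N$. If $\theta_i$ denotes the angular position of the $i$-th plus-type endpoint in this order (so the plus-type endpoints $+1,\dots,+t$ occur in that cyclic order), then its partner $-i$ sits at $\theta_i+\alpha$ where $\alpha:=2\pi q/p$, since $-i$ is the projection of $f_3\circ g_{p,q}$ applied to the point projecting to $+i$, and $f_3$ fixes longitudes while $g_{p,q}$ rotates them counterclockwise by $2\pi q/p$. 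Note $0<\alpha<2\pi$ because $1\le q<p$; thus the whole cyclic word in the symbols $\pm1,\dots,\pm t$ is governed by $(\theta_1,\dots,\theta_t)$ together with the fixed offset $\alpha$.

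The key point is that \emph{a disk diagram is standard if and only if its plus-type endpoints are cyclically consecutive on $\partial B_0^2$}. For the nontrivial direction, assume no minus-type endpoint lies between two consecutive plus-type ones, so all $t$ minus-type endpoints lie in the arc complementary to $[\theta_1,\theta_t]$. Since the set of minus-type endpoints is the image of the plus-type ones under a single rotation by $\alpha$, which preserves cyclic order, it is again cyclically consecutive, occupying a subarc of the complement in the order $-1,\dots,-t$; the inequalities forced by "no two endpoints coincide" give $\theta_1+\alpha>\theta_t$ and $\theta_t+\alpha<\theta_1+2\pi$, so that subarc comes immediately after $+t$. Reading counterclockwise from just before $+1$ therefore yields $(+1,\dots,+t,-1,\dots,-t)$. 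Hence it suffices to bring all plus-type endpoints into one arc.

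To do this for $p>2$, I use $R_6$ moves, each of which slides a boundary endpoint along $\partial B_0^2$ past an adjacent one; the forced simultaneous motion of the partner endpoint (at offset $\alpha$) generically sweeps over no other endpoint, so after a preliminary small isotopy excluding the degenerate case a single $R_6$ move transposes a plus-type endpoint with a neighbouring minus-type one. Choosing a block of consecutive plus-type endpoints of maximal length, any plus-type endpoint outside it is separated from the block by a string of $m\ge 1$ minus-type endpoints, and $m$ successive $R_6$ moves carry it into the block, strictly enlarging it; after finitely many steps all plus-type endpoints are consecutive, hence by the previous paragraph the diagram is standard. For $p=2$ one has $\alpha=\pi$, the partner of $+i$ is its antipode, and an $R_6$ move transposes two antipodal pairs at once; the analogous gathering still works, and the hemisphere symmetry of the model legitimately implements the simultaneous relabelling $+i\leftrightarrow-i$ on the whole boundary, which one invokes if after gathering the cyclic word reads the minus-type labels first. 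All small isotopies repositioning the $\theta_i$ are licit, being induced by moving the corresponding points of $L'\cap\partial B^3$ along $E_+$.

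The step that needs real care is the gathering: one must ensure that the forced displacement of a partner endpoint during an $R_6$ move does not reverse progress already made at other plus-type endpoints, so that the procedure terminates. The clean fix is to choose, at each stage, an $R_6$ move that strictly decreases a monotone quantity — for instance the number of $+\,-$ adjacencies in the cyclic word, equivalently one less than the number of plus-type blocks — pushing it down to its minimum $0$; verifying that such a move always exists is exactly where one must use what $R_6$ does to the partner endpoint, and it is for this reason that the case $p=2$, in which the partner motion cannot be decoupled, is handled separately with the help of the sign-exchange symmetry. Everything else is routine bookkeeping with the cyclic word.
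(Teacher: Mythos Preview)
Your combinatorial reformulation in the second paragraph---that ``standard'' is equivalent to the plus-type endpoints being cyclically consecutive---is correct and pleasant, and it is not something the paper makes explicit. The observation that the minus-type endpoints are the image of the plus-type ones under a rigid rotation by $\alpha=2\pi q/p$, hence inherit the same cyclic order, is exactly the right way to see why consecutiveness already forces the full word $(+1,\dots,+t,-1,\dots,-t)$.

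The gap is in your termination argument. In paragraph three you assert that ``after a preliminary small isotopy excluding the degenerate case a single $R_6$ move transposes a plus-type endpoint with a neighbouring minus-type one,'' i.e.\ that the partner $-i$ can be assumed not to sweep past any other endpoint. But the preliminary isotopy you would use to arrange this is itself a motion of some pair $\{+k,-k\}$, and there is no a~priori reason it does not itself create unwanted swaps elsewhere; the difficulty does not disappear, it just migrates. You recognise this in your final paragraph and propose the right kind of fix---a monotone quantity such as the number of maximal plus-blocks---but you then stop at ``verifying that such a move always exists is exactly where one must use what $R_6$ does to the partner endpoint.'' That verification \emph{is} the content of the proposition for $p>2$, and you have not carried it out. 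As written, the argument is a correct outline with its central lemma left unproved.

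The paper's proof avoids this by giving an explicit algorithm rather than an existence argument: fix $+1$ and $-1$, then slide $+2$ next to $+1$ (so $-2$ slides next to $-1$), then $+3$ next to $+2$, and so on. It concedes that the partner motion may cause an \emph{opposite} $R_6$ move (one that creates a crossing) along the way, but observes that with this particular ordering each such opposite move is undone by a later $R_6$ move, so the net effect is a sequence of genuine $R_6$ moves. If you want to complete your approach instead, you should exhibit, at any non-standard stage, a specific pair $(+i,-j)$ adjacent on the circle whose swap strictly decreases your block count while the induced motion of $-i$ either crosses nothing or crosses an endpoint in a way that does not increase it---and check this using the rigid offset $\alpha\neq\pi$ that distinguishes $p>2$ from $p=2$.
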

 
\begin{proof}
For $p=2$, the exchange of the signs of a boundary point corresponds to a small isotopy on the link, that crosses the equator of $B^3$. For $p>2$, the following strategy has to be considered.
By definition, the endpoints $+1, \ldots, +t$ on the boundary are always in this order if we forget the minus-type points. 
The endpoints $+i$ and $-i$ can be moved together along the boundary, with their respective arcs. Moreover we can assume that this small isotopy is performed close enough to the boundary as to avoid crossings.
Our aim is to bring all the plus-type boundary points one aside the other, respecting their labeling order. The isotopy performed can exchange $+i$ and $-j$ producing an $R_{6}$ move. 
Sometimes also the $-i$ endpoint may be exchanged with a $+k$ endpoint, producing an opposite $R_6$ move, that is to say, a move that creates one crossing. Consider the following algorithm: fix $+1$ and $-1$, bring $+2$ next to $+1$ (and hence $-2$ next to $-1$), bring $+3$ next to $+2$ ($-3$ next ot $-2$) and so on. If we apply it,  then an opposite $R_6$ move is always canceled by a subsequent $R_6$ move, that is to say, to get a standard disk diagram is enough to perform a sequence of $R_6$ moves. See Figure \ref{exR6} for an example. 
\end{proof}
 
\begin{figure}[h!]                      
\begin{center}                         
\includegraphics[width=14cm]{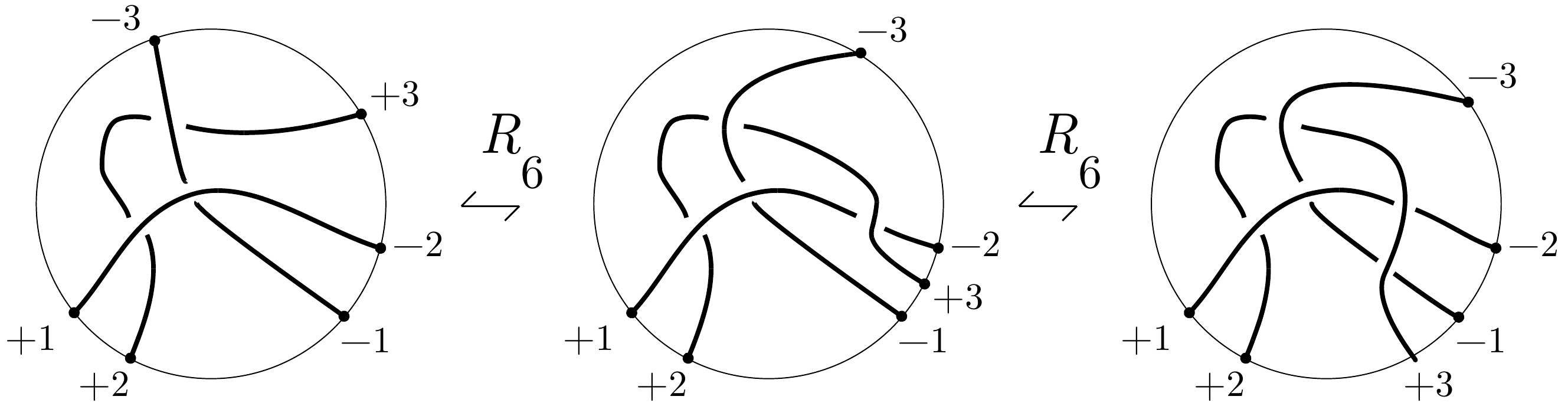}
\caption[legenda elenco figure]{Example of $R_{6}$-reduction to standard disk diagram.}\label{exR6}
\end{center}
\end{figure}

\subsection{Lift of links}
Let $L$ be a link in the lens space $ L(p,q)$; we denote by $\widetilde L=P^{-1}(L)$ the \emph{lift} of $L$ in $\s3$ under the quotient map $P \colon \s3 \rightarrow L(p,q)$.

Let $L$ be a link in $L(p,q)$, denote with $\nu$ its number of components, and with $\delta_{1}, \ldots, \delta_{\nu}$ the homology class in $H_{1}(L(p,q)) \cong \mathbb{Z}_{p}$ of the $i$-th component $L_{i}$ of $L$. In \cite{CMM} it is described a method that allows the computation of the homology classes from the disk diagram.

\begin{prop}\label{comp}
Given a link $L \subset L(p,q)$, the number of components of $\widetilde L$ is \vspace{-5mm}
$$\sum_{i=1}^{\nu} \gcd (\delta_{i},p).$$
\vspace{-10mm}
\end{prop}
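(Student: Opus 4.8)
The plan is to treat the components of $L$ one at a time and reduce the count to a standard fact about coverings of the circle. Since the components $L_1,\dots,L_\nu$ of $L$ are pairwise disjoint, so are their preimages under $P$, and $\widetilde L = P^{-1}(L) = \bigsqcup_{i=1}^{\nu} P^{-1}(L_i)$. Hence it suffices to show that for a single knot component $L_i$ the set $P^{-1}(L_i)$ has exactly $\gcd(\delta_i,p)$ connected components; summing over $i$ then gives the formula.

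Fix $i$ and let $\iota\colon L_i\hookrightarrow L(p,q)$ be the inclusion. The restriction of $P$ to $P^{-1}(L_i)$ is the covering obtained by pulling back $P$ along $\iota$, hence it is a covering of the circle $L_i\cong\s1$, and its connected components correspond bijectively to the orbits of the monodromy action of $\pi_1(L_i)\cong\mathbb Z$ on a fibre $P^{-1}(x_0)$. Since $P\colon\s3\to L(p,q)$ is the regular (for $p>1$, universal) covering with cyclic deck group $G_{p,q}\cong\mathbb Z_p$, the fibre $P^{-1}(x_0)$ is a free transitive $G_{p,q}$-set, which we identify with $\mathbb Z_p$; under this identification the monodromy of a loop $\gamma$ in $L_i$ is translation by the image of $[\gamma]$ under $\iota_*\colon\pi_1(L_i)\to\pi_1(L(p,q))$ followed by the abelianization isomorphism $\pi_1(L(p,q))\cong H_1(L(p,q))\cong\mathbb Z_p$. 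By the very definition of the homology class, $\iota_*$ carries a generator of $\pi_1(L_i)$ to $\delta_i$, so the image of the monodromy representation is the subgroup $\langle\delta_i\rangle\leqslant\mathbb Z_p$.

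The orbit count is then elementary: $\langle\delta_i\rangle$ has order $p/\gcd(\delta_i,p)$, and it acts freely on $\mathbb Z_p$ by translation, so the number of orbits is $p/|\langle\delta_i\rangle| = \gcd(\delta_i,p)$. Equivalently, each component of $P^{-1}(L_i)$ is a circle covering $L_i$ with degree equal to the order of $\delta_i$ in $\mathbb Z_p$, namely $p/\gcd(\delta_i,p)$, and since the degrees add up to $p$ there are exactly $\gcd(\delta_i,p)$ such circles. The step that genuinely has to be justified, rather than merely invoked, is the identification of the monodromy image with $\langle\delta_i\rangle$: this is precisely where the meaning of $\delta_i$ as the homology class of $L_i$, computed from the disk diagram in \cite{CMM}, enters, together with the fact that for $p>1$ the group $\pi_1(L(p,q))$ is abelian and isomorphic to $\mathbb Z_p$. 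A more hands-on alternative, closer in spirit to the rest of the paper, is to follow one component of $\widetilde L$ through the $p$ sheets of the cylinder model of $\s3$ over $L(p,q)$: traversing $L_i$ once shifts the sheet index by $\delta_i$, so the lifted arc closes up after exactly $p/\gcd(\delta_i,p)$ passes, which yields the same count.
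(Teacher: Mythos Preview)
Your argument is correct and follows the same route as the paper: reduce to a single component and use that the cyclic $p$-fold cover over a loop of class $\delta_i$ has $\gcd(\delta_i,p)$ components. The paper simply asserts this last fact for cyclic covers, whereas you spell out the monodromy/orbit-count justification, so your version is a more detailed rendering of the same idea rather than a different approach.
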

\begin{proof}
The covering $\s3 \rightarrow L(p,q)$ is cyclic of order $p$, so that each component $L_{i}$ of $L$ has lift $\widetilde L_{i}$ with $\gcd ( \delta_{i}, p )$ components. As a consequence, if we sum over all the components of $L$, the lift $\widetilde L$ has $\sum_{i=1}^{\nu} \gcd (\delta_{i},p)$ components.
\end{proof}

The construction of a diagram for $\widetilde L \subset \s3$ starting from a disk diagram of $L \subset L(p,q)$ is explained by the following two theorems. The case of \mbox{$L(2,1) \cong \rp3$} is outlined in \cite{D}. Before stating the theorems, we must not forget the following notation about braids.
Let $B_{t}$ be the braid group on $t$ letters and let $\sigma_{1}, \ldots, \sigma_{t-1}$ be the Artin generators of $B_{t}$. Consider the Garside braid $\Delta_{t}$ on $t$ strands defined by $ (\sigma_{t-1}\sigma_{t-2} \cdots \sigma_{1})( \sigma_{t-1}\sigma_{t-2} \cdots \sigma_{2}) \cdots (\sigma_{t-1}) $ and illustrated in Figure \ref{treccia}. This braid can be seen also as a positive half-twist of all the strands and it belongs to the center of the braid group, that is to say, it commutes with every braid. Moreover $\Delta_{t}^{-1}$ can be represented in the braid group by the word $ (\sigma_{t-1}^{-1}\sigma_{t-2}^{-1} \cdots \sigma_{1}^{-1})( \sigma_{t-1}^{-1}\sigma_{t-2}^{-1} \cdots \sigma_{2}^{-1}) \cdots (\sigma_{t-1}^{-1}) $.

\begin{figure}[h!]                      
\begin{center}                         
\includegraphics[width=12cm]{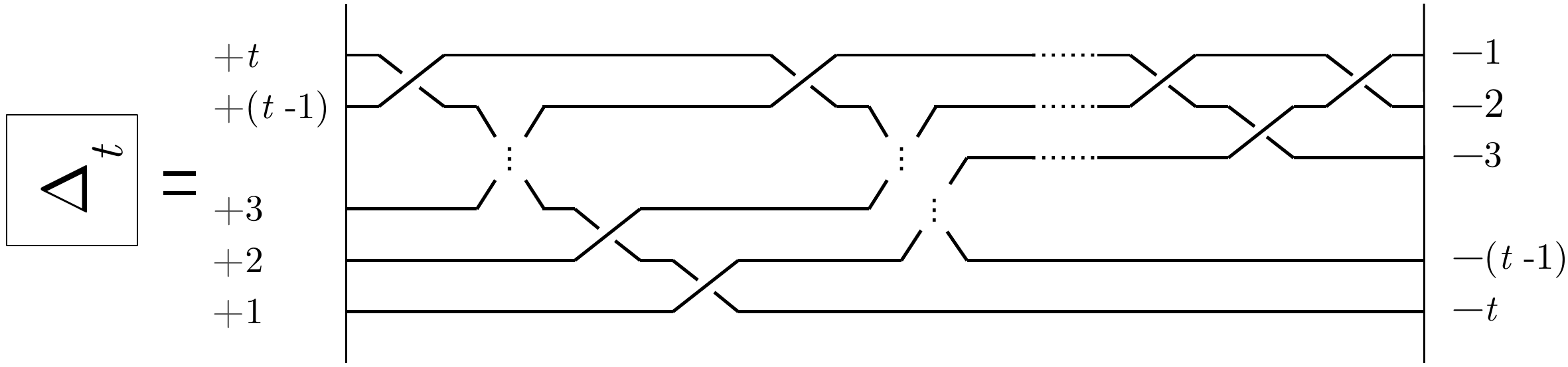}
\caption[legenda elenco figure]{The braid $\Delta_{t}$.}\label{treccia}
\end{center}
\end{figure}

\begin{teo}\label{teolift1}
Let $L$ be a link in the lens space $L(p,q)$ and let $D$ be a standard disk diagram for $L$; then a diagram for the lift $\widetilde L \subset \s3$ can be found as follows (refer to Figure \ref{diag}):
\begin{itemize}
\item consider $p$ copies $D_{1}, \ldots, D_{p}$ of the standard disk diagram $D$;
\item for each $i=1, \ldots, p-1$, using the braid $\Delta_{t}^{-1}$, connect the diagram $D_{i+1}$ with the diagram $D_{i}$, joining the boundary point $-j$ of $D_{i+1}$ with the boundary point $+j$ of $D_{i}$;
\item connect $D_{1}$ with $D_{p}$ via the braid $\Delta_{t}^{2q-1}$, where the boundary points are connected as in the previous case.
\end{itemize}

\begin{figure}[h!]                      
\begin{center}                         
\includegraphics[width=12cm]{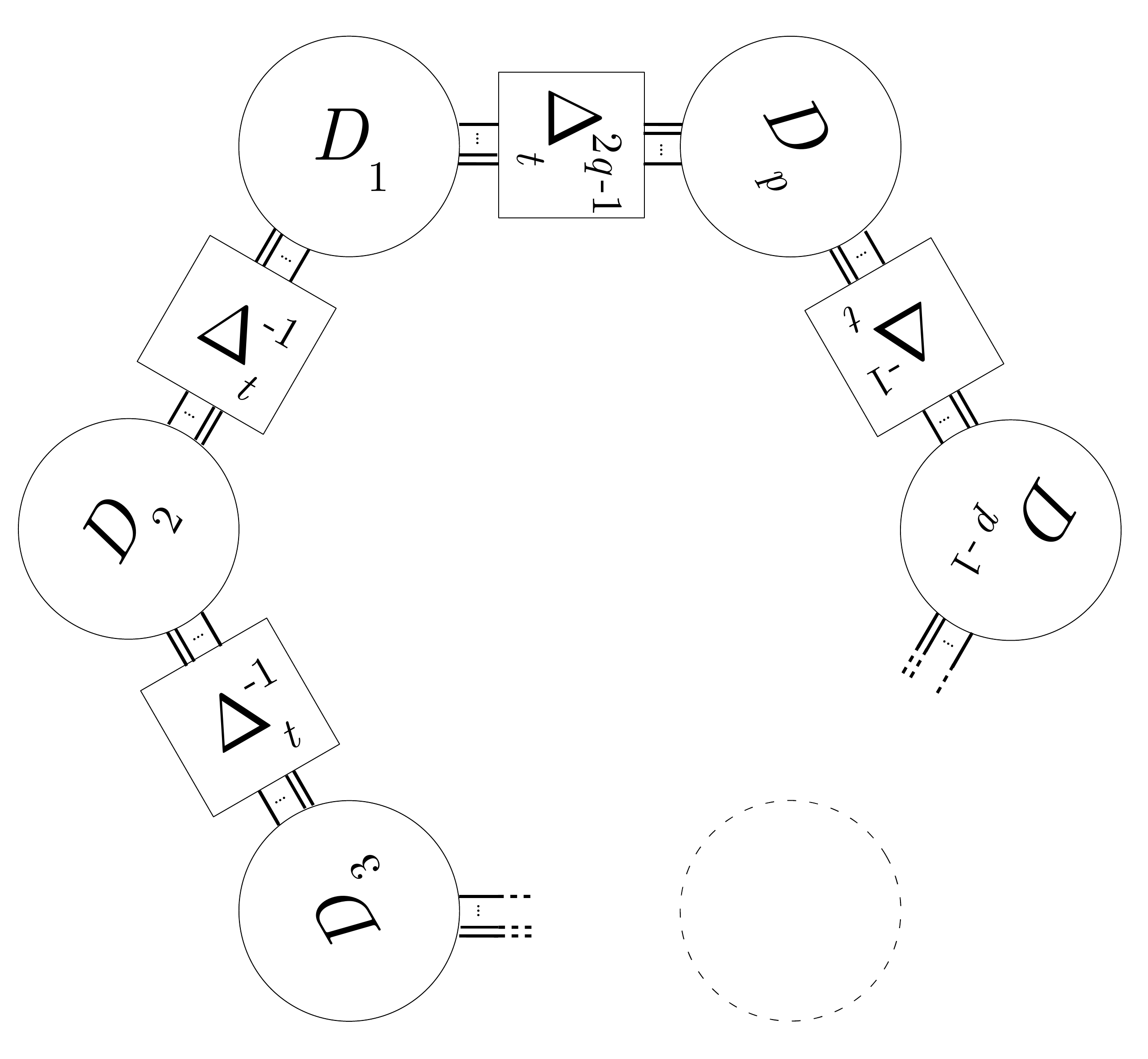}
\caption[legenda elenco figure]{Diagram of the lift in $\s3$ of a link in $L(p,q)$.}\label{diag}
\end{center}
\end{figure}
\end{teo}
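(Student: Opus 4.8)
The plan is to realize $\s3$ concretely as a cyclic chain of $p$ copies of the ball model of $L(p,q)$, carry both the link and the disk-diagram projection up through this chain, and then identify the gluing braids.

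First I would use the solid-torus model of Section~\ref{sub1}: cutting $\s3$ along $p$ equally spaced meridian disks of its Heegaard solid torus yields $p$ cylinders $R_{1},\dots,R_{p}$, each of which, after the lateral collapse, is a copy of the ball model of $L(p,q)$ together with its equatorial disk, and the deck transformation of $P\colon\s3\to L(p,q)$ cyclically permutes them. Fixing an identification $R_{1}\cong B^{3}$ and transporting it by the deck transformation, $P|_{R_{i}}$ becomes the quotient map $F$, the face of $R_{i}$ glued to $R_{i+1}$ becomes $E_{+}$ (and the opposite one $E_{-}$), and the gluing between consecutive copies is exactly the relation identifying $x\in E_{+}$ with $f_{3}\circ g_{p,q}(x)\in E_{-}$ that defines $L(p,q)$. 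Hence $\widetilde L\cap R_{i}$ is a copy of $L'=F^{-1}(L)$, projecting it onto the equatorial disk of $R_{i}$ gives a copy $D_{i}$ of the standard diagram $D$, and since $+j\sim-j$ an arc of $\widetilde L$ that leaves $D_{i}$ at the boundary point $+j$ re-enters at the boundary point $-j$ of $D_{i+1}$ (indices mod $p$). This already produces the cyclic chain of copies with matched boundary points, so only the braid carried by the bundle of $t$ arcs joining $D_{i}$ to $D_{i+1}$ remains to be determined.

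To compute it I would localize near the circle $K\subset\s3$ which is the common lift of the equator, i.e.\ the common boundary of the $p$ lifted equatorial disks; in a tubular neighbourhood $N(K)\cong S^{1}\times D^{2}$ these disks appear as $p$ equally spaced leaves $S^{1}\times(\text{ray})$, the deck transformation acts as the screw rotating the $D^{2}$-factor by $2\pi/p$ and translating $K$ by $2\pi q/p$, and the connecting bundles are precisely the parts of $\widetilde L$ lying in $N(K)$, each sweeping one angular sector between consecutive leaves. Flattening these $p$ leaves into the plane, one sees that matching $+j\leftrightarrow-j$ between two consecutive \emph{standard} diagrams (Proposition~\ref{standard}) forces the bundle of $t$ strands to reverse its linear order, which is nothing but a half-twist $\Delta_{t}^{\pm1}$; besides these reversals, the only effect to account for is the twisting accumulated because the meridian disk is rotated by $2\pi q/p$ at each step, amounting to $2\pi q$ over the full cycle, that is $q$ extra full twists $\Delta_{t}^{2q}$. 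Since everything is up to ambient isotopy, one may arrange $D_{1},\dots,D_{p}$ in a row so that the $p-1$ intermediate bundles are pure reversals while the closing bundle from $D_{p}$ to $D_{1}$ absorbs all the extra twisting; reading the sense of the screw then gives the reversal as $\Delta_{t}^{-1}$, so the intermediate braids are $\Delta_{t}^{-1}$ and the closing one is $\Delta_{t}^{-1}\cdot\Delta_{t}^{2q}=\Delta_{t}^{2q-1}$, as stated. As a consistency check, the product of all $p$ braids is $\Delta_{t}^{-(p-1)}\Delta_{t}^{2q-1}=\Delta_{t}^{2q-p}$, which should equal the monodromy of the family of lifted equatorial disks around $K$ and can be computed directly from the two models of $L(p,q)$.

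The step I expect to be the real work is this braid analysis: fixing orientations carefully enough to pin down the sign of the half-twists (so it is $\Delta_{t}^{-1}$ and not $\Delta_{t}^{+1}$) and the sign of the accumulated $\Delta_{t}^{2q}$, and giving a rigorous argument that $\widetilde L$ can be isotoped so that all the extra twisting is concentrated in a single band, the other $p-1$ bands are clean half-twists, and no crossings are forced into the disks $D_{i}$ themselves. The rest — realizing $\s3$ as the chain $R_{1},\dots,R_{p}$, recognizing each $D_{i}$ as a copy of $D$, and obtaining the matching $+j\leftrightarrow-j$ — is essentially bookkeeping once the equivalence of the two models of the lens space from Section~\ref{sub1} is available.
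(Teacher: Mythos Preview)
Your approach is essentially the one the paper uses: realize $\s3$ as $p$ copies of the ball model glued cyclically via the solid-torus description of Section~\ref{sub1}, recognize each piece of $\widetilde L$ as a copy of $D$, and then read off the connecting braids. The only difference is presentational: where you analyze the braid in a tubular neighbourhood of the lifted equator and argue abstractly that matching $+j\leftrightarrow -j$ forces an order reversal $\Delta_t^{\pm1}$ plus an accumulated $\Delta_t^{2q}$, the paper simply rotates each meridian disk $D_i$ by $\pi/2$ about a diameter to lay it flat in the plane, and observes directly that this rotation forces the connecting parallel arcs into a $\Delta_t^{-1}$, while straightening the toric wrap of $2\pi q$ about the core contributes $\Delta_t^{2q}$ to the closing band. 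Your honest acknowledgment that pinning down the signs is ``the real work'' is exactly where the paper's more hands-on move pays off: the concrete $\pi/2$ rotation fixes the sign of the half-twist without further argument.
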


\begin{proof}

Let $L$ be a link in $L(p,q)$ and let $D$ be a standard disk diagram for it. 
The lift in $\s3$ can be obtained from the model of $\s3$ where the solid torus has each parallel which collapses into a point. From this model the lens space $L(p,q)$ is described as in Section \ref{sub1}, so we can embed into the solid torus the $p$ copies $D_1, \ldots, D_p$ of the standard disk diagram $D$ in $L(p,q)$. The $p$ copies of the diagram are embedded as disks bounded by a meridian. Each of them is rotated by $2 \pi q/p$ radians around $\mathfrak{l} =\s1 \times \{ 0 \}$, with respect to the previous copy of the diagram. By this rotation, if you consider the parallel $\s1 \times \{ Q \}$ on the boundary of the torus that passes through the endpoint $+j$ of $D_{i}$, then it passes also through $-j$ of $D_{i+1}$. In the solid torus model, each of these parallels collapses to a point, so that all the pairs previously described are identified. If we want to show this identification, we can draw on our torus each arc of the parallel from $+j \in D_{i}$ to $-j \in D_{i+1}$, as Figure \ref{liftS3} shows, obtaining a representation for the lift $\widetilde L$ in the solid torus model of $\s3$.
\begin{figure}[h!]                      
\begin{center}                         
\includegraphics[width=12cm]{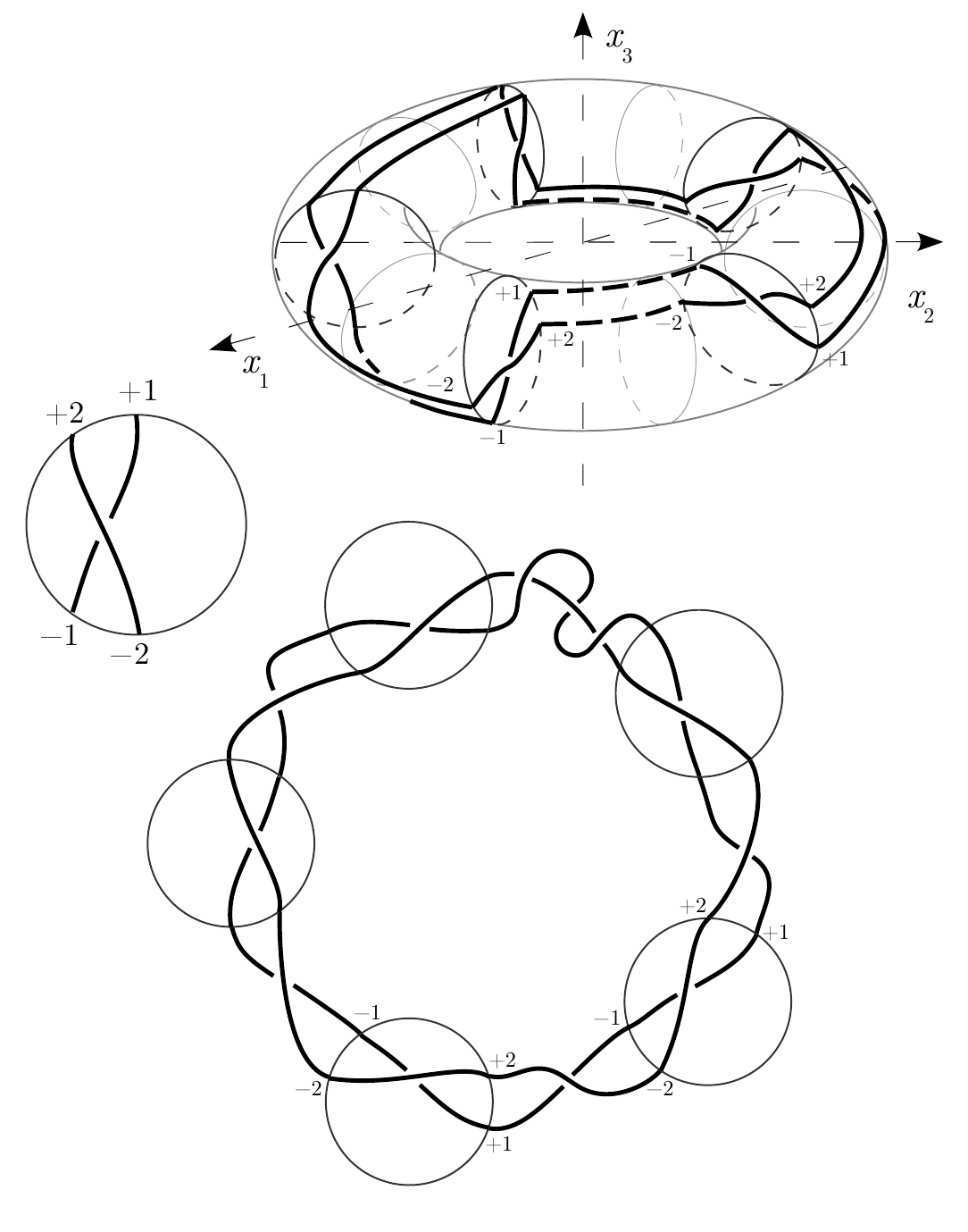}
\caption[legenda elenco figure]{Lift in $\s3$ of a link in $L(5,2)$.}\label{liftS3}
\end{center}
\end{figure}

In order to get a planar diagram for $\widetilde L$ that comes from this representation, we can do as follows. Put $\s1 \times D^{2}$ into $\mathbb{R}^{3}$ and fix cartesian axis $(x_1,x_2,x_3)$, where $x_3$ is orthogonal to the plane containing $\s1$. For each copy $D_{i}$ of $D$, consider its intersection with the plane $\{ x_3=0\}$ and rotate $D_{i}$ around this diameter by $\pi/2$ radians, so that $D_{i}$ is turned upward. As a result, the connection lines between the two disks $D_{i}$ and $D_{i+1}$ are braided by $\Delta_{t}^{-1}$ in order to avoid the projection of the two disks. 
Furthermore, when a toric braid, twisting around the core of $2 \pi q$, becomes planar, we have to add another piece of braid, namely $\Delta_{t}^{2q}$. 
In this way we will have exactly the planar diagram of Figure \ref{diag}. 
\end{proof}

\begin{oss}
The lift in $\s3$ of a link $L\subset L(p,q)$ is exactly a \emph{$(p,q)$-lens link} in $\s3$, according to \cite{C4}. Precisely, the $n$-tangle $T$ that Chbili uses in his construction is the composition of the disk diagram $D$ of $L \subset L(p,q)$ with the braid $\Delta_{t}^{-1}$.
\end{oss}

The previous planar diagram of the lift has not got the least possible number of crossings. Indeed if, in the last step of the previous proof, we rotate $D_{1}$ of $\pi/2$ radians and $D_{2}$ of $-\pi / 2 $ radians around the diameter of the diagram, we avoid the braid $\Delta_{t}^{-1}$ between the two disks. We now explain how to get a diagram with fewer crossings. First of all, let us define the reverse disk diagram $\overline D$ of $D$: consider the symmetry of $D$ with respect to an external line and then exchange all overpasses/underpasses. 

\begin{prop}\label{another}
Let $L$ be a link in the lens space $L(p,q)$ and let $D$ be a standard disk diagram for $L$; then a diagram for the lift $\widetilde L \subset \s3$ can be found as follows (refer to Figure \ref{DiagSollD}):
\begin{itemize}
\item consider $p$ copies $D_{1}, \ldots D_{p}$ of the standard disk diagram $D$, then denote $F_{i}=D_{i}$ if $i$ is odd, and $F_{i}=\overline D_{i}$ if $i$ is even;
\item for each $i=1, \ldots, p-1$, using a trivial braid, connect the diagram $F_{i+1}$ with the diagram $F_{i}$ joining the boundary point $-j$ of $D_{i+1}$ with the boundary point $+j$ of $D_{i}$;
\item connect $D_{1}$ with $D_{p}$ via the braid $\Delta_{t}^{2q-p}$, where the boundary points are connected as in the previous case.
\end{itemize}
\begin{figure}[h!]                      
\begin{center}                         
\includegraphics[width=10cm]{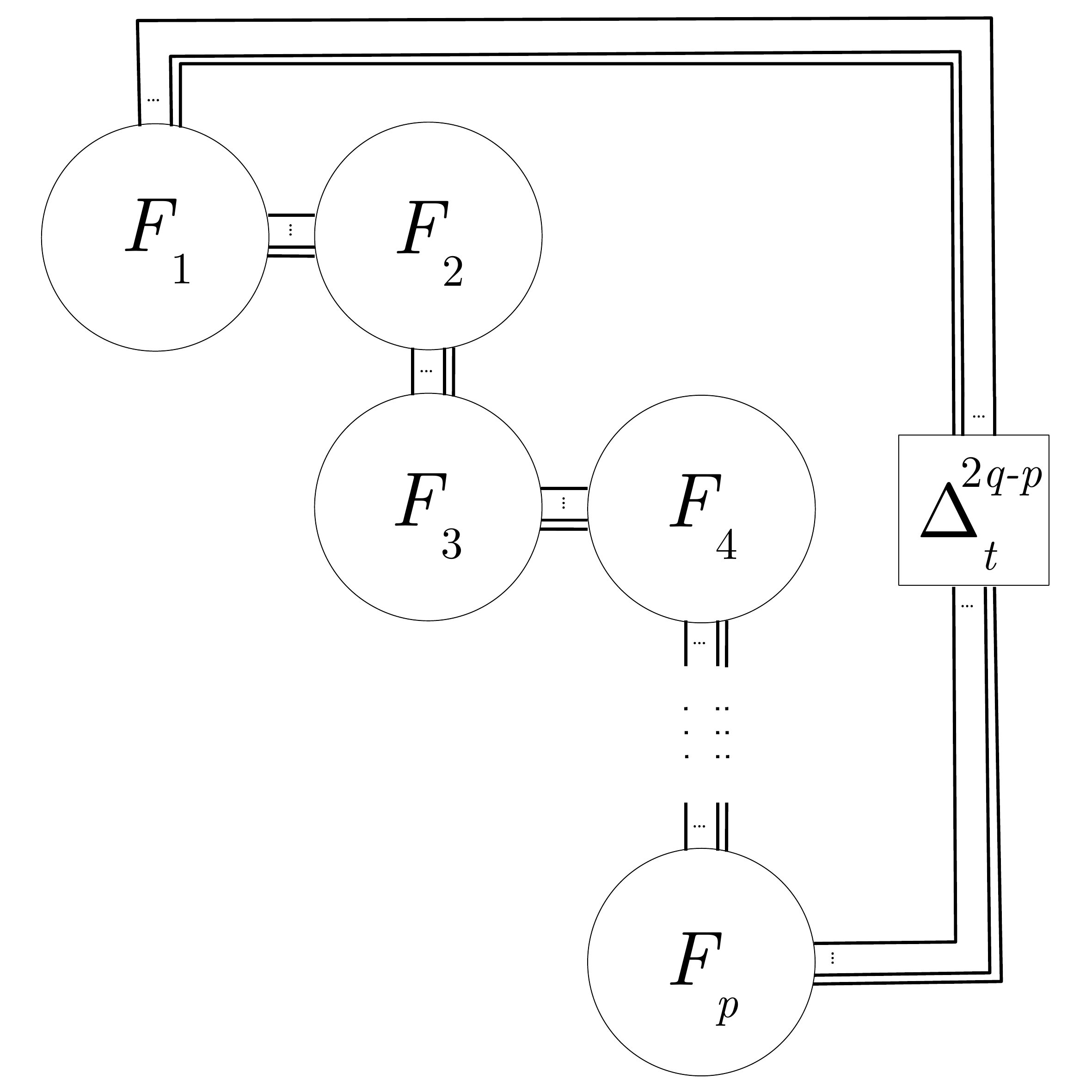}
\caption[legenda elenco figure]{Another diagram of the lift in $\s3$ of a link in $L(p,q)$.}\label{DiagSollD}
\end{center}
\end{figure}
\end{prop}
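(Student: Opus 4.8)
The plan is to re-use the spatial picture of the lift $\widetilde L$ built in the proof of Theorem~\ref{teolift1} and only change the way it is flattened onto a plane. There $\widetilde L$ is realised inside the solid torus model of $\s3$ as $p$ meridian disks carrying the copies $D_1,\dots,D_p$ of $D$, each obtained from the previous one by a rotation of $2\pi q/p$ around the core $\mathfrak{l}$, together with the connecting arcs on the boundary torus; a planar diagram is then produced by tilting \emph{every} $D_i$ by $+\pi/2$ around the diameter lying in the plane $\{x_3=0\}$, which is precisely what creates the braids $\Delta_t^{-1}$ between consecutive disks. Since the isotopy type of the resulting link in $\s3$ does not depend on how we draw it, it is enough to flatten the very same spatial picture differently: tilt $D_i$ by $+\pi/2$ when $i$ is odd and by $-\pi/2$ when $i$ is even, and then identify the diagram that comes out.

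Two points are essentially local to a single disk, respectively to a single gap. First, tilting a copy of $D$ by $-\pi/2$ rather than $+\pi/2$ around the same diameter is the old tilt followed by a rotation of $\pi$ about that diameter; on a planar diagram this rotation is the map $(x_1,x_2,x_3)\mapsto(x_1,-x_2,-x_3)$, that is, the reflection across the diameter together with the exchange of all over/under information, which (after the harmless isotopy bringing the axis to an external line) is exactly the passage from $D$ to its reverse diagram $\overline D$. Hence the even copies appear as $\overline D_i=F_i$ and the odd ones as $D_i=F_i$, in agreement with the statement. Second, between two consecutive disks one is now tilted by $+\pi/2$ and the other by $-\pi/2$, so the half-twists that these two opposite tilts impart to the $t$ arcs joining the disks cancel and the connecting braid between $F_i$ and $F_{i+1}$ becomes trivial for every $i=1,\dots,p-1$; this is precisely the phenomenon noted, for $p=2$, in the remark preceding the statement, and I would only have to redraw the corresponding local picture for one gap.

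What is left is to determine the braid joining $D_1$ and $D_p$. The guiding principle is that the two flattenings are projections of one and the same spatial link, so the total power of the central braid $\Delta_t$ carried by the cyclic diagram — the contribution $\Delta_t^{2q}$ of the twist that winds around $\mathfrak{l}$, corrected by the artefacts of the flattening — must coincide with the one in Theorem~\ref{teolift1}, namely $(\Delta_t^{-1})^{p-1}\Delta_t^{2q-1}=\Delta_t^{2q-p}$; as all $p-1$ internal braids are now trivial, this forces the closing braid to be $\Delta_t^{2q-p}$. Turning this into a rigorous argument is the step I expect to be delicate: one has to follow precisely how the $2\pi q$ of twisting around the core redistributes when the disks are laid flat in alternating directions, and to make sure that the internal braids are genuinely trivial and do not hide a pair of compensating half-twists, since a half-twist cannot be freely slid through the general tangles $D_i$ and so the cancellations must be read off an explicit local picture rather than manipulated algebraically in $B_t$. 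The safest way to organise the write-up is probably to isolate the toric part — replace the disks by trivial $t$-braids, note that the $t$-strand cable then carries the framing twist $2\pi q$ along the core $\mathfrak{l}$ and so contributes $\Delta_t^{2q}$ for either flattening, and count directly the $p$ corrective half-twists produced by the alternating tilt — which yields the factor $\Delta_t^{-p}$ and hence the closing braid $\Delta_t^{2q-p}$.
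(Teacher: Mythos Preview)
Your approach is correct in spirit but genuinely different from the paper's. The paper does not return to the three–dimensional picture at all: it starts from the planar diagram already obtained in Theorem~\ref{teolift1} and performs a \emph{combing}, namely it flips $D_i$ upside down $i-1$ times for $i=2,\dots,p$. Each single flip of a disk turns $D_i$ into $\overline D_i$ (or back) and at the same time slides one factor $\Delta_t^{-1}$ from the gap on its left to the gap on its right. Iterating, the even disks end up reversed, the odd ones unchanged, and the $p-1$ internal factors $\Delta_t^{-1}$ are pushed one by one into the closing gap, where they accumulate against $\Delta_t^{2q-1}$ to give $\Delta_t^{2q-p}$.

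Your route — re-flattening the solid-torus picture with alternating tilts — makes the appearance of the $\overline D_i$'s and of the trivial internal braids immediate and geometric, which is a gain. What it costs you is exactly the closing braid: your final count of ``$p$ corrective half-twists from the alternating tilt'' is not justified as written, since $p-1$ of the gaps now carry no half-twist at all, and for the last gap (where $D_p$ and $D_1$ may even share the same tilt direction when $p$ is odd) you have not explained why the artifact is $\Delta_t^{-p}$ rather than $\Delta_t^{-1}$ or $\Delta_t^{0}$. The fix is what you yourself suggest: replace $D$ by a trivial $t$-braid, observe that in both flattenings every connecting braid is a power of $\Delta_t$, and then use that both diagrams represent the same torus link in $\s3$ (the closure of $\Delta_t^{2q-p}$, by Theorem~\ref{teolift1}); since distinct powers of $\Delta_t$ close to distinct torus links, the closing braid must equal $\Delta_t^{2q-p}$. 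That is a clean argument, but note that it needs the extra remarks just listed, which the paper's combing avoids entirely by tracking the half-twists explicitly.
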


Please refer to Figure \ref{linktreccia} for an example of diagram of the lift.

\begin{proof}
Consider the planar diagram of the lift of Theorem \ref{teolift1} and comb it, reversing upside down $D_{2}$, reversing two times $D_{3}$, three times $D_{4}$ and so on. The odd-index diagrams are unchanged and all the even-index diagrams become $\overline D_{2}, \overline D_{4}, \ldots$ in the new diagram of the lift. The $p-1$ braids $\Delta_{t}^{-1}$ between the disks are shifted near the braid $\Delta_{t}^{2q-1}$, so that you get $\Delta_{t}^{2q-p}$ in this new form of the diagram, reducing the number of crossings.
\end{proof}

\end{section}


\begin{section}{Lift of families of knots and links}

In this section we show the behavior of the lift for several knot constructions like split links, composite knots and braid links. Remember that a knot is \emph{trivial} if it bounds a $2$-disk in $L(p,q)$ and that a link $L \subset L(p,q)$ is \emph{local} if it is contained inside a $3$-ball. The disk diagram of a local link, up to generalized Reidemeister moves, can avoid $\partial B^{2}_{0}$. As a consequence of Theorem \ref{teolift1}, a local link is lifted to $p$ disjoint copies of itself.

\subsection{Split, composite and satellite links} 
The definition of split links in $\s3$ can be generalized to lens spaces: a link $L \subset L(p,q)$ is \emph{split} if there exists a $2$-sphere in the complement $L(p,q) \smallsetminus L$ that separates one or more components of $L$ from the others. 
The $2$-sphere separates $L(p,q)$ into a ball $\hat B^3$ and $\overline{L(p,q) \smallsetminus \hat B^3 }$; as a consequence, a split link is the disjoint union of a local link and of another link in lens space. 
If we consider the lift of a split link $L= L_1 \sqcup L_2$, where $L_1 \subset \hat B^3$ and $L_2 \subset L(p,q) \smallsetminus \hat B^3$, then $L_1$ are lifted to $p$ split copies of $L_1$ and $L_2$ are lifted to some link $\widetilde L_2$. In formulae: $$\widetilde L=  \underbrace{L_1 \sqcup \ldots \sqcup  L_1}_{p}  \sqcup \ \widetilde L_2 .$$

We can easily generalize the definition of satellite knot to lens space, following Section C, Chapter 2 of \cite{BZ}.
Take $K_p$ a knot in the solid torus $T$ that is neither contained inside a $3$-ball nor the core of the solid torus, and call it \emph{pattern}. Let $e \colon T \rightarrow L(p,q)$ be an embedding such that e(T) is the tubular neighborhood of a non-trivial knot $K_c \subset L(p,q)$. The knot $K:=e(K_p) \subset L(p,q)$ is the \emph{satellite} of the knot $K_c$, called \emph{companion} of $K$. The satellite of a link can be constructed by specifying the pattern of each component. In addition the pattern of a satellite knot can be a link too.
A \emph{cable} knot is a satellite knot with a torus knot as pattern.
We do not have explicit formulae for the lift of satellite or cable knots, but Example \ref{CE3} helps us to understand the behavior of the lift.


\emph{Composite} knots are a special case of satellite knots, that is to say, satellite knots where the pattern, up to isotopy in $T$, has the following two properties: there exists a meridian of $T$ such that the disk bounding the meridian intersects the pattern in a single point, moreover the pattern must not be isotopic to the core of the solid torus. The notation in this case becomes $K=K_c \sharp K_p$ (\emph{connected sum}), and $K_p$ can be seen also as a knot in $\s3$.

Let $K_1 \subset L(p,q)$ be a \emph{primitive}-homologous knot, that is to say, a knot whose homology class in $H_{1}(L(p,q))$ is coprime with $p$ (we require this because, according to Proposition \ref{comp}, its lift is a knot). Let $K_2 \subset \s3$ be a knot. Then the lift $\widetilde K$ of the connected sum $K=K_1 \sharp K_2$ is $$\widetilde K=\widetilde K_1 \sharp \underbrace{K_2 \sharp \ldots \sharp K_2}_{p}.$$
This formula can be proved in the following way: up to generalized Reidemeister move, we can suppose that the disk diagram of $K_1 \sharp K_2$ has the projection of $K_2$ all contained in a disk inside $B^{2}_{0}$, therefore from the diagram of Theorem \ref{teolift1} we can easily see the result.

In order to define the connected sum for links we have to specify the component of each link to which we add the pattern.
If we consider a knot $K_1 \subset L(p,q)$ such that $\gcd([K_1],p) \neq 1$ or a link $L_1$ with more than one component, then, because of Proposition \ref{comp}, its lift has more than one component. In this case the lift can be found selecting the components of $\widetilde K_1$ or $\widetilde L_1$ where the copies of $K_2$ have to be connected.

As a consequence, if a link $L \subset L(p,q)$ is composite, then also its lift $\widetilde L \subset \s3$ is composite. That is to say, if $\widetilde L$ is prime, then we know that $L$ is prime too. 

\subsection{Links in lens spaces from braids}
We can construct a link $L \subset L(p,q)$ starting from a braid $B$ on $t$ strands by considering the standard disk diagram where the braid $B$ has its two ends of the strands on the boundary, indexed respectively by the points $( +1, \ldots, +t)$ and $( -1, \ldots, -t )$. See Figure \ref{linktreccia2} for an example. In this case, we say that $B$ represents $L$.

\begin{figure}[h!]                      
\begin{center}                         
\includegraphics[width=9cm]{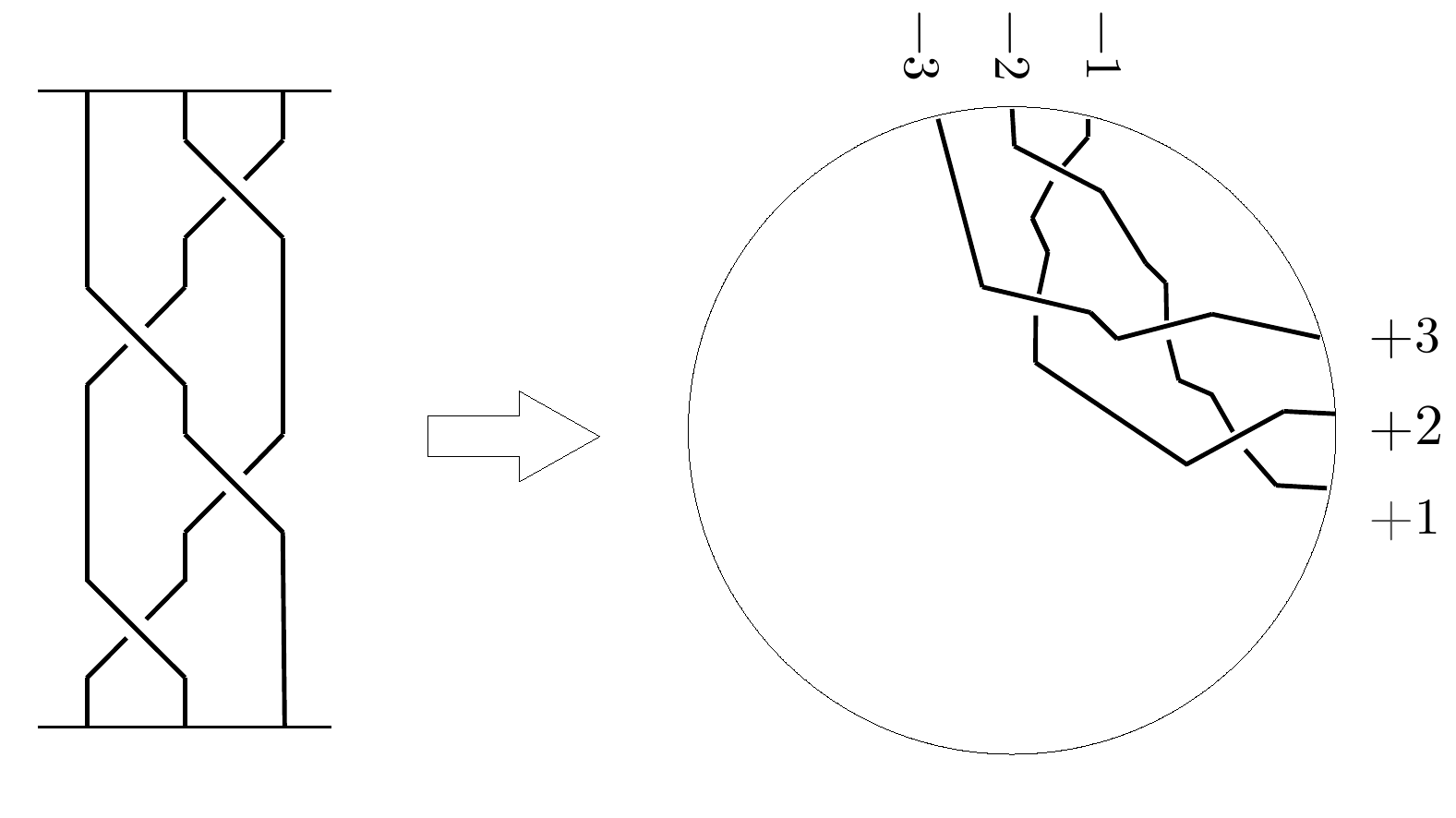}
\caption[legenda elenco figure]{The braid $B=\sigma_2\sigma_1\sigma_2\sigma_1$ becomes a standard disk diagram.}\label{linktreccia2}
\end{center}
\end{figure}

\begin{prop}
If $L \subset L(p,q)$ is a link represented by the braid $B$ on $t$ strands, then $\widetilde L$ is the link obtained by the closure in $\s3$ of the braid $(B \Delta_{t}^{-1})^{p}_{\vphantom{t}}  \Delta_{t}^{2q}$ or equivalently $B^{p}_{\vphantom{t}}  \Delta ^{2q-p}_{t}$.
\end{prop}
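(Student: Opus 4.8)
The plan is to combine the braid representation of $L$ with the two descriptions of the lift diagram given in Theorem~\ref{teolift1} and Proposition~\ref{another}. First I would recall that, by hypothesis, $L$ is represented by a standard disk diagram $D$ whose only crossings are those of the braid $B$, with the top ends of the strands landing on the boundary points $+1,\ldots,+t$ and the bottom ends on $-1,\ldots,-t$; since these labels occur in the order $(+1,\ldots,+t,-1,\ldots,-t)$ along $\partial B^2_0$, the diagram $D$ is already standard and Theorem~\ref{teolift1} applies directly.

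Next I would unwind the recipe of Theorem~\ref{teolift1} for this particular $D$. We take $p$ copies $D_1,\ldots,D_p$; for $i=1,\ldots,p-1$ the copy $D_{i+1}$ is joined to $D_i$ through a $\Delta_t^{-1}$, identifying the boundary point $-j$ of $D_{i+1}$ with the boundary point $+j$ of $D_i$. Since each $D_i$ is just the braid $B$ sitting between its top ends ($+$-labels) and bottom ends ($-$-labels), reading the whole configuration from the top of $D_p$ downwards we encounter $B$ (namely $D_p$), then $\Delta_t^{-1}$, then $B$ again ($D_{p-1}$), then $\Delta_t^{-1}$, and so on, so that the composite of the stacked braids is $(B\,\Delta_t^{-1})^{p-1} B$; finally $D_1$ is connected back to $D_p$ through $\Delta_t^{2q-1}$. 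Closing up this last connecting braid together with the stack amounts to appending $\Delta_t^{2q-1}$ and then taking the braid closure, giving the closure of $(B\,\Delta_t^{-1})^{p-1} B\, \Delta_t^{2q-1} = (B\,\Delta_t^{-1})^{p}\,\Delta_t\,\Delta_t^{2q-1} = (B\,\Delta_t^{-1})^{p}\,\Delta_t^{2q}$ — here I simply move the final $B$ into the product at the cost of an extra $\Delta_t^{-1}\cdot\Delta_t$, which is legitimate because the closure is invariant under conjugation and because $\Delta_t$ is central. This yields the first claimed form.

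For the equivalent form $B^{p}\,\Delta_t^{2q-p}$, I would use the centrality of $\Delta_t$ once more: in the word $(B\,\Delta_t^{-1})^{p}\,\Delta_t^{2q}$ all $p$ factors of $\Delta_t^{-1}$ commute past the $B$'s to the end, collecting as $\Delta_t^{-p}$, so the braid equals $B^{p}\,\Delta_t^{-p}\,\Delta_t^{2q} = B^{p}\,\Delta_t^{2q-p}$, and its closure is unchanged. Alternatively this is exactly what one reads off the ``combed'' diagram of Proposition~\ref{another}, where the $p-1$ trivial braids between consecutive disks leave $D_1$ connected to $D_p$ by $\Delta_t^{2q-p}$ and the stack of $p$ copies contributes $B^p$ (with the reversed copies $\overline{D}_i$ accounted for as in the proof of Proposition~\ref{another}); I would mention this as the geometric shortcut. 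The only point needing a little care — and the one I would single out as the main obstacle — is bookkeeping the orientation/reading direction of the stacked copies so that the connecting braids genuinely concatenate to $(B\,\Delta_t^{-1})^{p}$ rather than to some conjugate or reversed word; once the convention ``each $D_i$ contributes $B$ read top-to-bottom, and $-j$ of $D_{i+1}$ glues to $+j$ of $D_i$'' is fixed, the rest is the routine algebra of a central element in $B_t$ together with invariance of the closure under conjugation.
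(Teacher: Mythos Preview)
Your argument is correct and follows the same route as the paper's: substitute the braid $B$ for each copy of the disk diagram in Theorem~\ref{teolift1}, read off the closure of $(B\Delta_t^{-1})^{p}\Delta_t^{2q}$, and then use the centrality of $\Delta_t$ to rewrite this as $B^{p}\Delta_t^{2q-p}$. You supply more bookkeeping detail (why $D$ is already standard, the concatenation, the alternative via Proposition~\ref{another}), but the proof is essentially the paper's.
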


\begin{proof}
Using Theorem \ref{teolift1}, we replace the $p$ copies of the disk diagram $D$ with the braid $B$ representing the link. The result is the closure of the braid $(B \Delta_{t}^{-1})^{p}_{\vphantom{t}}  \Delta_{t}^{2q}$ in $\s3$, that can be transformed also into the braid $B^{p}_{\vphantom{t}}   \Delta ^{2q-p}_{t}$, since $\Delta_{t}$ is an element that belongs to the center of the braid group.
\end{proof}

\begin{oss}
The braid $B^{p}_{\vphantom{t}}  \Delta_{t}^{2q-p}$ is exactly the \emph{$(p,q)$-lens braid} of \cite{C3}.
\end{oss}

Which links in lens spaces are lifted to torus links? We have the following result, stated in \cite{C4}, that generalizes a result of \cite{H} for torus knots. Remember that the torus link $T_{n,m} \subset \s3$ is the closure of the braid $(\sigma_{1}\sigma_{2}\cdots \sigma_{n-1})^{m}$. 

\begin{prop}{\upshape\cite{C4}}
The torus link $T_{n,m}$ is a $(p,q)$-lens link (that is to say, it is the lift of some link in $L(p,q)$) if and only if $p$ divides $m-nq$.
\end{prop}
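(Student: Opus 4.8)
The plan is to use the braid description of the lift developed just above: a link $L\subset L(p,q)$ represented by a braid $B$ on $t$ strands lifts to the closure of $B^{p}_{\vphantom{t}}\Delta_{t}^{2q-p}$. The torus link $T_{n,m}$ is by definition the closure of $(\sigma_{1}\sigma_{2}\cdots\sigma_{n-1})^{m}$ on $n$ strands, and a fundamental fact about the Garside element is that $\Delta_{n}^{2}=(\sigma_{1}\sigma_{2}\cdots\sigma_{n-1})^{n}$ (the full twist on $n$ strands). So I would try to realize $(\sigma_{1}\cdots\sigma_{n-1})^{m}$ in the form $B^{p}\Delta_{n}^{2q-p}$ for a suitable braid $B$ on $n$ strands, working with $t=n$.

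First I would show the ``if'' direction. Suppose $p\mid m-nq$, say $m-nq=pk$ for some integer $k$. Then, writing $\alpha_{n}=\sigma_{1}\sigma_{2}\cdots\sigma_{n-1}$, we have
$$
\alpha_{n}^{m}=\alpha_{n}^{\,nq+pk}=\bigl(\alpha_{n}^{n}\bigr)^{q}\,\alpha_{n}^{pk}
=\Delta_{n}^{2q}\,\alpha_{n}^{pk}.
$$
Since $\Delta_{n}$ is central, this equals $\alpha_{n}^{pk}\,\Delta_{n}^{2q}=\bigl(\alpha_{n}^{k}\bigr)^{p}\,\Delta_{n}^{2q}$, and I want the exponent on $\Delta_{n}$ to be $2q-p$; so rewrite $\Delta_{n}^{2q}=\Delta_{n}^{\,2q-p}\Delta_{n}^{p}=\Delta_{n}^{\,2q-p}\,(\Delta_{n})^{p}$ — no, better: put $B=\alpha_{n}^{k}\,\Delta_{n}$. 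Then $B^{p}\Delta_{n}^{2q-p}=\alpha_{n}^{pk}\Delta_{n}^{p}\Delta_{n}^{2q-p}=\alpha_{n}^{pk}\Delta_{n}^{2q}=\alpha_{n}^{m}$, using centrality of $\Delta_{n}$ throughout. Hence the link $L\subset L(p,q)$ represented by $B=\alpha_{n}^{k}\Delta_{n}$ on $n=t$ strands has lift the closure of $B^{p}\Delta_{n}^{2q-p}=\alpha_{n}^{m}$, which is exactly $T_{n,m}$. (One should also note the degenerate case $n=1$, where every braid is trivial and $T_{1,m}$ is the unknot: here the condition $p\mid m-q$ should be checked directly against the corresponding $L(p,q)$ computation, or one may simply take $t$ large and add trivial strands.)

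For the ``only if'' direction, suppose $T_{n,m}$ is a $(p,q)$-lens link, i.e.\ its lift equals the closure of some $B^{p}\Delta_{t}^{2q-p}$ with $B\in B_{t}$. The key invariant to exploit is the exponent sum (writhe / abelianization) homomorphism $e\colon B_{t}\to\mathbb{Z}$ sending each $\sigma_{i}\mapsto 1$; it is invariant under conjugation and under Markov stabilization in the appropriate graded sense, and more importantly the exponent sum of a braid whose closure is a given link is determined modulo the relevant stabilization ambiguity — but for this argument the cleanest route is: the closure of $B^{p}\Delta_{t}^{2q-p}$ and the closure of $\alpha_{n}^{m}$ are the same link, and two braids (possibly on different numbers of strands) with the same closure are related by Markov moves, under which $e(\beta)-(\text{number of strands})$ is invariant. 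Thus $e(B^{p}\Delta_{t}^{2q-p})-t=e(\alpha_{n}^{m})-n$. Now $e(\alpha_{n}^{m})=m(n-1)$ and $e(\Delta_{t})=\binom{t}{2}=t(t-1)/2$, so $e(B^{p}\Delta_{t}^{2q-p})=p\,e(B)+(2q-p)\cdot t(t-1)/2$. Reducing the resulting identity modulo $p$ kills the $p\,e(B)$ term and, after simplification, should yield $m-nq\equiv 0\pmod p$. This is the step I expect to be the main obstacle: the Markov-invariance of $e(\beta)-(\text{strand number})$ is standard, but pinning down that $T_{n,m}$'s genuine lift is a torus link forces a careful comparison, and I may instead prefer to cite \cite{C4} or \cite{C3} for the braid-theoretic normal form and only verify the mod-$p$ arithmetic. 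Alternatively, one can run the whole argument inside the mapping-torus/freely-periodic picture: $T_{n,m}$ admits a free $\mathbb{Z}_{p}$-action compatible with the $(p,q)$-structure precisely when the induced rotation numbers on the two Hopf-fiber circles match the $L(p,q)$ data, which again reduces to the congruence $m\equiv nq\pmod p$; I would keep this as a sanity check but present the braid computation as the main proof.

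Finally I would remark that the construction in the ``if'' part is explicit — the preimage link $L\subset L(p,q)$ is represented by the disk diagram associated to the braid $\alpha_{n}^{k}\Delta_{n}$ — so the proposition not only characterizes which torus links arise but exhibits preimages, which is what the examples in the next section exploit.
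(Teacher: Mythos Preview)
Your ``if'' direction is correct and is exactly what the paper does: with $\alpha_n=\sigma_1\cdots\sigma_{n-1}$ and $\Delta_n^2=\alpha_n^{\,n}$, one rewrites $\alpha_n^{\,m}=(B\Delta_n^{-1})^{p}\,\alpha_n^{\,nq}$ and takes $B=\alpha_n^{\,k}\Delta_n$ when $m-nq=pk$. The explicit preimage you exhibit is the same one the paper has in mind.

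For the ``only if'' direction, there is a concrete gap. The quantity $e(\beta)-(\text{number of strands})$ is \emph{not} a Markov invariant: it is preserved by conjugation and by positive stabilization $\beta\mapsto\beta\sigma_t$, but negative stabilization $\beta\mapsto\beta\sigma_t^{-1}$ lowers $e$ by $1$ while raising the strand count by $1$, changing your quantity by $-2$. So the mod~$p$ computation you sketch cannot be carried out as written; at best you get a congruence mod $\gcd(p,2)$, which is far too weak. Your instinct that this is ``the main obstacle'' is right, and your fallback --- to cite \cite{C4} --- is also what the paper effectively does: its own argument stops at the equation $(B\Delta_n^{-1})^{p}=\alpha_n^{\,m-nq}$ in $B_n$ and declares the conclusion ``straightforward'', which already presumes the lift comes from a braid link on exactly $n$ strands and still requires a divisibility argument in $B_n$ (e.g.\ via uniqueness of roots). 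The full necessity --- that \emph{no} link in $L(p,q)$ whatsoever lifts to $T_{n,m}$ unless $p\mid m-nq$ --- is really the content of \cite{C4} (and \cite{H} for knots), so citing it is the honest route.
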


\begin{proof}
The torus link is the closure of the braid $(\sigma_{1}\sigma_{2}\cdots \sigma_{n-1})^{m}$ and the lift of our braid link is the closure of the braid $B^{p}  \Delta_{n}^{2q-p}$.
We know that in the braid group the element $\Delta_{n}^{2}$ can be represented by the word $ (\sigma_1 \cdots \sigma_{n-1})^n$.
Therefore the equality turns into $(\sigma_{1}\sigma_{2}\cdots \sigma_{n-1})^{m}=(B \Delta_{n}^{-1})^{p}  (\sigma_{1}\sigma_{2}\cdots \sigma_{n-1})^{nq}$ and the result is straightforward.
\end{proof}

\end{section}


\begin{section}{Different knots and links in lens spaces with equivalent lift.}

An invariant $I$ of links is \emph{complete} if $I(L_{1})=I(L_{2})$ implies that $L_{1} $ and $ L_{2}$ are equivalent, where $L_{1}$ and $L_{2}$ are two links.
In knot theory, an invariant that is both complete and easy to compute is still unknown. We have several examples of complete invariants: the knot group for prime knots in $\s3$ (this is the corollary of the results contained in \cite{Wh}  and \cite{GL}), the fundamental quandle for knots in $\s3$ \cite{J, M}, the oriented fundamental augmented rack for links in $3$-manifolds \cite{FR} and so on.
On the contrary, all invariants easy to compute, such as Jones or Alexander polynomials, cannot distinguish some pairs of different links, that is to say, they are not complete.

In this section we use the braid construction of the lift to find different links in lens spaces with an equivalent lift, that is, to prove that the lift is not a complete invariant. 

\subsection{Counterexamples from braid tabulation}

Given a braid $B$, denote by $ \widehat{B}$ the link in $\s3$ obtained as the closure of $B$.
The first step is to understand whether the Garside braid produces equivalent links $\widehat{\Delta_{t}^{k}} \subset \s3$ for different $t$ and $k$. The computations are summed up in Table \ref{LIFT}; the labels of the links are the one of the Knot Atlas \cite{KA}.

\begin{table}[h!]
\begin{center}
\begin{tabular}{|c|c|c|}
\hline 
$t$ & $B$ & $\widehat{B}$  \\
\hline
$1$ & $\Delta_1^0$ & $0_1$  \\
\hline
$2$ & $\Delta_2^0$ & $0_1 \sqcup 0_1$  \\
\hline
$2$ & $\Delta_2^1$ & $0_1$  \\
\hline
$2$ & $\Delta_2^2$ & $L2a1$  \\
\hline
$2$ & $\Delta_2^3$ & $3_1$  \\
\hline
$2$ & $\Delta_2^4$ & $L4a1$  \\
\hline
$2$ & $\Delta_2^5$ & $5_1$  \\
\hline
$2$ & $\Delta_2^6$ & $L6a3$  \\
\hline
\end{tabular}
\hspace{2cm}
\begin{tabular}{|c|c|c|}
\hline 
$t$ & $B$ & $\widehat{B}$  \\
\hline
$3$ & $\Delta_3^0$ & $0_1 \sqcup 0_1 \sqcup 0_1$  \\
\hline
$3$ & $\Delta_3^1$ & $L2a1$  \\
\hline
$3$ & $\Delta_3^2$ & $L6n1$  \\
\hline
$3$ & $\Delta_3^3$ & $L9n15$  \\
\hline
$4$ & $\Delta_4^0$ & $0_1 \sqcup 0_1 \sqcup 0_1 \sqcup 0_1$  \\
\hline
$4$ & $\Delta_4^1$ & $L4a1$  \\
\hline
$5$ & $\Delta_5^0$ & $0_1 \sqcup 0_1 \sqcup 0_1 \sqcup 0_1 \sqcup 0_1$  \\
\hline
$5$ & $\Delta_5^1$ & $L8n3$  \\
\hline
\end{tabular}
\end{center}
\caption[legenda elenco figure]{Links arising from the closure of Garside braids.}
\label{LIFT}
\end{table}

Greater string numbers or greater powers give links outside standard tabulations.
Moreover, for negative powers, we obtain the link that is the mirror image of the link with the corresponding positive power. If the link is amphicheiral, like the trivial knot or the Hopf link (also denoted by $L2a1$), then the closure is the same. 

At this stage we are looking for a braid $\Delta^{k}_{t}$ representing a link in $L(p,q)$ such that its lift is one of the possibilities in Table~\ref{LIFT}. As a consequence of Proposition~\ref{another}, the lift is the link represented by the braid $\Delta_{t}^{k \cdot p} \Delta_{t}^{2q-p}$. Hence we look for solutions of the equation: $\Delta_{t}^{k \cdot p} \Delta_{t}^{2q-p}=\Delta_{t}^{h}$, where $h$ is the suitable power of $\Delta_{t}$ that gives us the desired lift. 

Now we list all the possible cases where the braid closures of Table~\ref{LIFT} are equivalent, the desired examples will rise from the following computations.

\begin{ese}{\textbf{Different knots in $L \left (p, \frac{p \pm 1}{2} \right )$ with trivial knot lift.}}\label{CE1}
The trivial knot can be obtained either as the closure of any power of $\Delta_{1}$ or as the closure of $\Delta_{2}^{\pm 1}$.
In the first case, the link in any lens space $L(p,q)$ represented by the braid on one single string is lifted to the trivial knot.
In the second case, namely $\Delta_{2}^{\pm1}$, we have to study the equation $\Delta_{2}^{k \cdot p} \Delta_{2}^{2q-p}=\Delta_{2}^{\pm 1}$, that is to say, $kp+2q-p=\pm 1$.
For the positive case $kp+2q-p=1$ we can obtain integer solutions with $0<q<p$ only for $k=0$, $p$ odd and $q=\frac{p+1}{2}$. For the negative case, we have $k=0$, $p$ odd and $q=\frac{p-1}{2}$.

If we look for a pair of different knots in the same $L(p,q)$, we have to restrict to $L\left(p, \frac{p\pm 1}{2} \right )$ with $p$ odd. 
Consider $K_1$ as the knot represented by the braid $\Delta_{1}=1_{1}$ and $K_2$ as the knot represented by the braid $\Delta_{2}^{0}=1_{2}$, they are illustrated in Figure \ref{disCE1}. Are $K_1$ and $K_2$ different knots?

\begin{figure}[h!]                      
\begin{center}                        
\includegraphics[width=10cm]{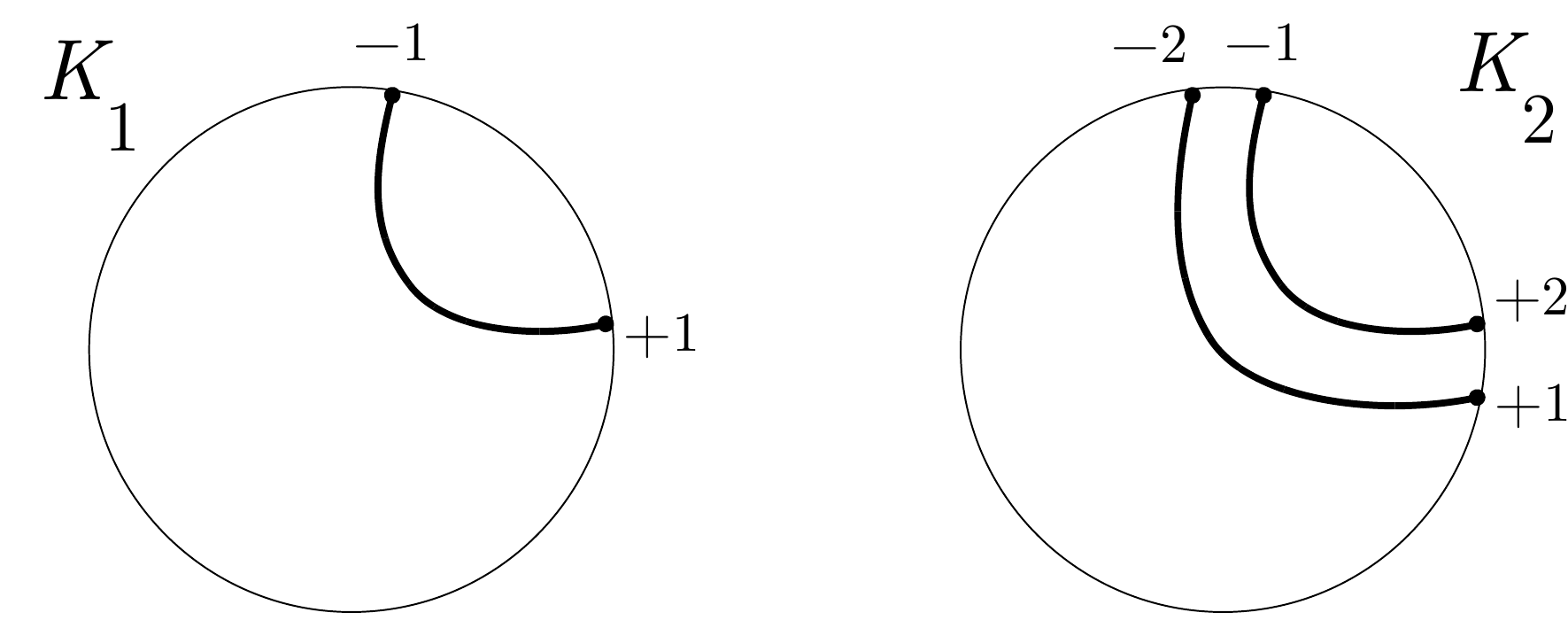}
\caption[legenda elenco figure]{Two different knots with equivalent lift in $L \left (p, \frac{p \pm 1}{2} \right )$.}\label{disCE1}
\end{center}
\end{figure}

The homology class $[K]= \delta$ of a knot in $L(p,q)$ can be \mbox{$0,1,\ldots p-1$,} but since we do not consider the orientation of the knots, we have to identify $\pm \delta$, so that the knots are partitioned into $\lfloor p/2 \rfloor +1$ classes: \mbox{$\delta=0,1,\ldots,  \lfloor p/2 \rfloor$,} where $\lfloor x \rfloor$ denotes the integer part of $x$. If two knots stay in different homology classes, they are necessarily different. The same reasoning holds also for links, with a more subtle partition.

Since $[K_1]=1$ and $[K_2]=2$, the two knots considered above in $L \left (p, \frac{p \pm 1}{2} \right )$ are different if $p>3$ and odd; if $p=3$ they are equivalent.

\end{ese}

\begin{ese}{\textbf{Different links in $L(4,1)$ with Hopf link lift.}}\label{CE}
As in the previous case, all the possible solutions of the corresponding equations are considered for the Hopf link $L2a1$. Table~\ref{L41} sums up the results.

\begin{table}[h!]
\begin{center}
\begin{tabular}{|c|c|c|}
\hline 
lift braid & equation & solutions  \\
\hline

  &  & for all $ p, \ L(p,1), \ k=1$  \\[-1.5ex] 
 \raisebox{2ex}{$\Delta_{2}^{2}$} &  \raisebox{2ex}{$kp+2q-p=2$} & for all $ p \textrm{ even}, \ L\left(p,\frac{p+2}{2} \right ), \ k=0$  \\
\hline

& & for all $  p, \ L(p,p-1), \ k=-1$  \\[-1.5ex]
 \raisebox{2ex}{$\Delta_{2}^{-2}$} &  \raisebox{2ex}{$kp+2q-p=-2$}  & for all $ p \textrm{ even}, \ L\left(p,\frac{p-2}{2} \right ), \ k=0$  \\
\hline

$\Delta_{3}^{1}$ & $kp+2q-p=1$ & for all $ p \textrm{ odd}, \ L\left(p,\frac{p+1}{2} \right ), \ k=0$  \\
\hline

$\Delta_{3}^{-1}$ & $kp+2q-p=-1$ & for all $ p \textrm{ odd}, \ L\left(p,\frac{p-1}{2} \right ), \ k=0$  \\

\hline
\end{tabular}
\end{center}
\caption[legenda elenco figure]{Links in lens spaces lifting to Hopf link.}
\label{L41}
\end{table}

We look for pairs of compatible solutions, and after excluding equivalent links, we get only the following pair of links in $L(4,1)$: consider the knot $L_A$ represented by the braid $B_{1}=1_{2}$ and the link $L_B$ represented by $B_{2}=\Delta_{2}$. They are different because they have a different number of components, but they have the same lift, the Hopf link. In order to better understand the topological construction of the lift, we illustrate it in Figure~\ref{linktreccia}.

\begin{figure}[h!]                      
\begin{center}                        
\includegraphics[width=14cm]{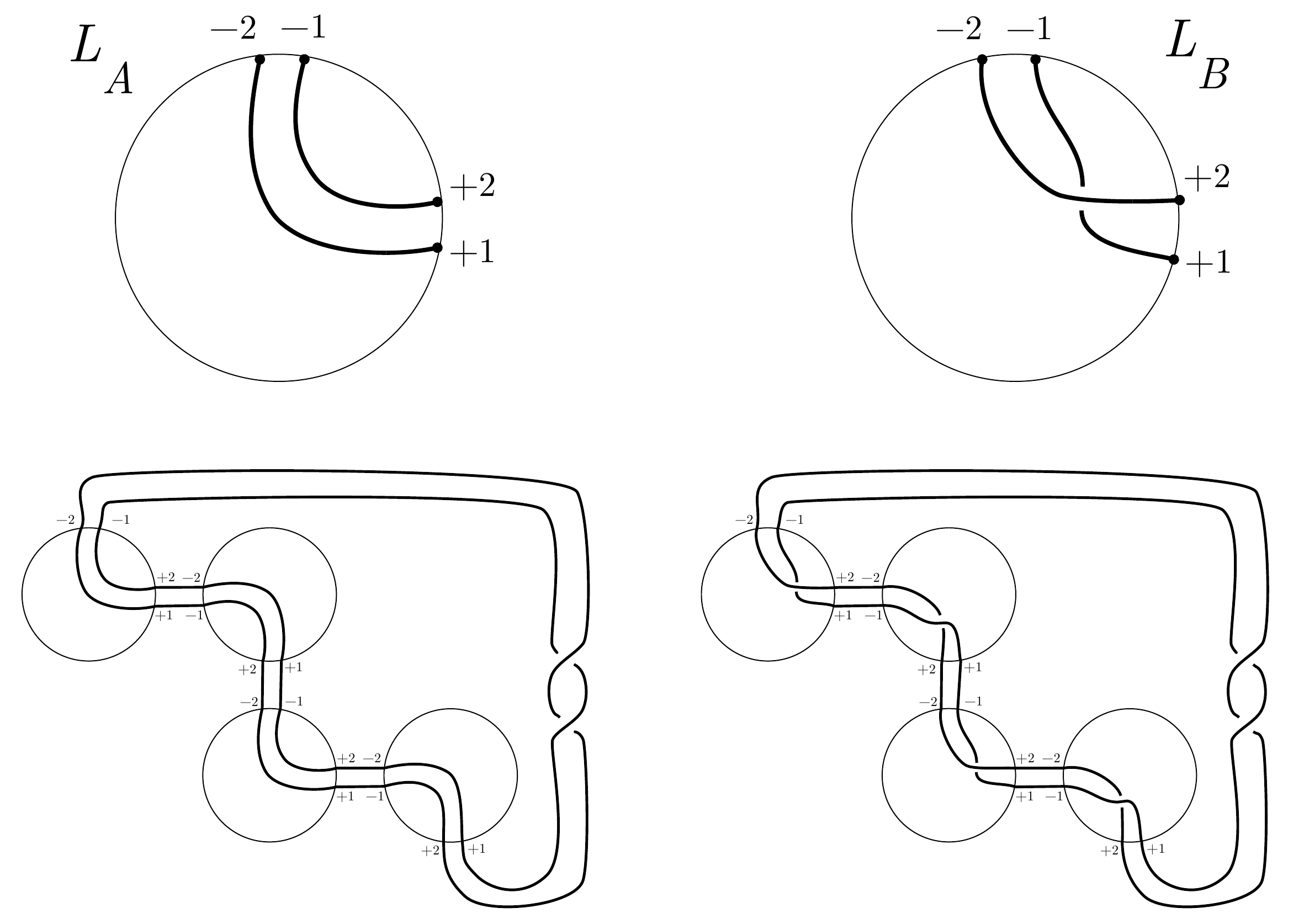}
\caption[legenda elenco figure]{Two different links with equivalent lift in $L(4,1)$.}\label{linktreccia}
\end{center}
\end{figure}
\end{ese}

The last case of Table~\ref{LIFT} is the link $L4a1$, that is not amphicheiral, hence Table~\ref{L52} is divided into two cases. Let $ m(L4a1) $ denote the mirror image of $L4a1$. No example rises from this case.

\begin{table}[h!]
\begin{center}
\begin{tabular}{|c|c|c|c|}
\hline 
link & lift braid & equation & solutions  \\
\hline

m(L4a1) & $\Delta_{4}^{1}$ & $kp+2q-p=1$ & for all $ p \textrm{ odd}, \ L\left(p,\frac{p+1}{2} \right ), \ k=0$  \\
\hline

m(L4a1) & & & for all $ p , \ L(p,2), \ k=1$  \\[-1.5ex] &
 \raisebox{2ex}{$\Delta_{2}^{4}$} &  \raisebox{2ex}{$kp+2q-p=4$} & for all $ p \textrm{ even}, \ L\left(p,\frac{p+4}{2} \right ), \ k=0$  \\
\hline

L4a1 & $\Delta_{4}^{-1}$ & $kp+2q-p=-1$ & for all $ p \textrm{ odd}, \ L\left(p,\frac{p-1}{2} \right ), \ k=0$  \\
\hline

 L4a1 & & & for all $ p, \ L(p,p-2), \ k=-1$  \\[-1.5ex] &
 \raisebox{2ex}{$\Delta_{2}^{-4}$} &  \raisebox{2ex}{$kp+2q-p=-4$} & for all $ p \textrm{ even}, \ L\left(p,\frac{p-4}{2} \right ), \ k=0$  \\
\hline

\hline
\end{tabular}
\end{center}
\caption[legenda elenco figure]{Links in lens spaces lifting to $L4a1$ or $m(L4a1)$.}
\label{L52}
\end{table}

\subsection{Counterexamples from satellite construction}

At this stage, the examples we have found are not completely satisfactory, because it is easy to distinguish the links with equivalent lift (different number of components or different homology class).
Therefore we now construct some satellite link of the previous examples, in order to get an infinite family of different links with the same number of components and the same homology class. 

\begin{ese}{\textbf{Different links in $L(4,1)$ with cables of Hopf link as lift.}}\label{CE3}
Consider the knot $L_A$ and the link $L_B$ of Example \ref{CE}. A satellite of $L_B$ can be the link where the two patterns are described by the two braid $\tau_{n}$ and $\psi_{m}$ on $n$ and $m$ strands respectively, as in part B1) of Figure \ref{Cable}. Label $B$ such link.

\begin{figure}[h!]                      
\begin{center}                        
\includegraphics[width=14cm]{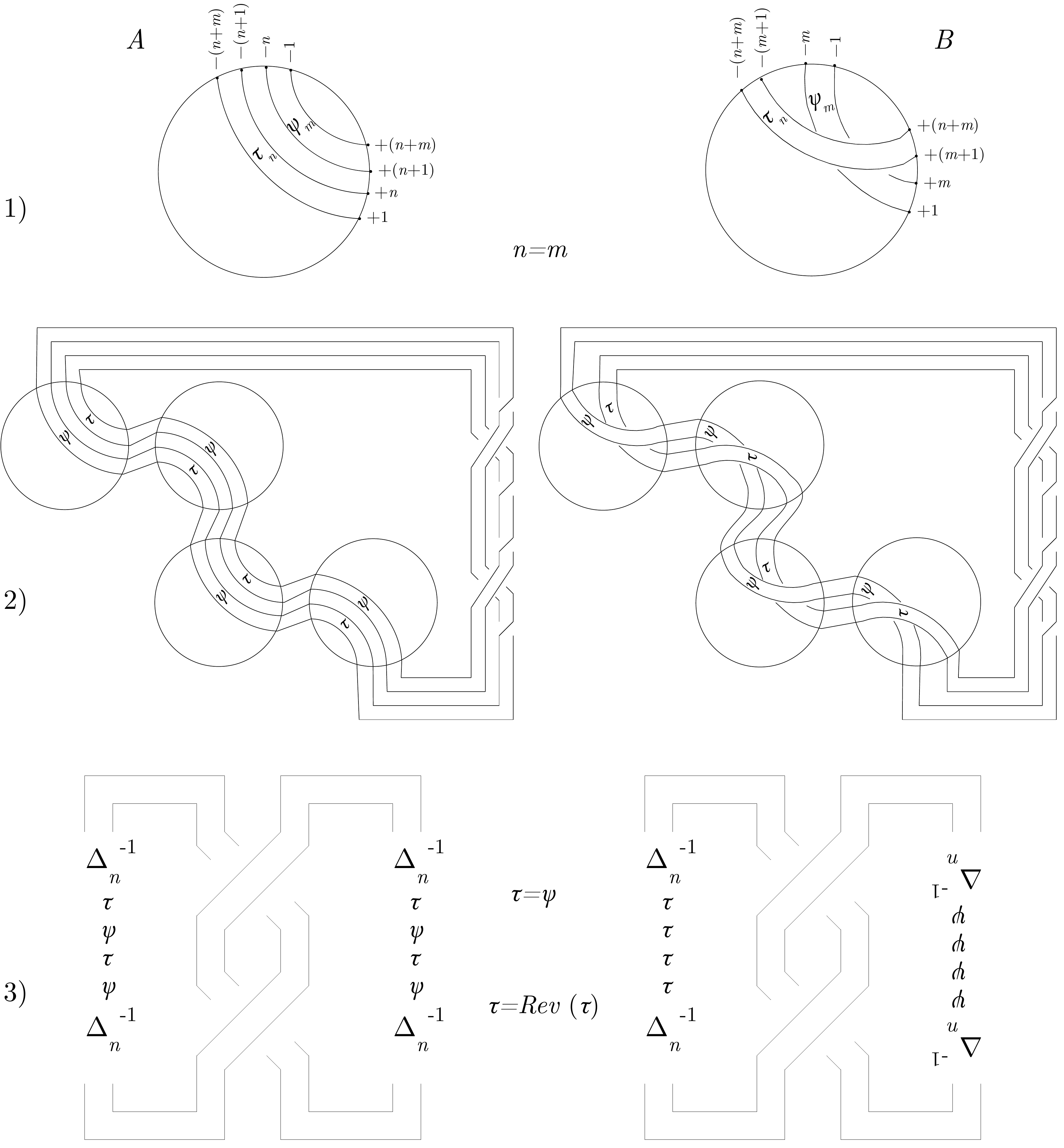}
\caption[legenda elenco figure]{Satellite construction of different links necessary to get new different links with equivalent lift.}\label{Cable}
\end{center}
\end{figure}

We need to make a satellite of the knot $L_A$ making the lift equivalent to the previous one, so we have to put the braids $\tau_{n}$ and $\psi_{m}$ on each overpass of the diagram of $L_A$, as in part A1) of Figure \ref{Cable}. Label $A$ such link.
Note that the boundary points of the two braids mix up, unless we assume $n=m$.

The lift diagram of the two considered links is illustrated in part 2) of Figure \ref{Cable} and in part 3) we make it explicit that the companion link is the Hopf link. The pattern braids are $\Delta_{n}^{-1} \tau \psi \tau \psi \Delta_{n}^{-1}$ on both sides of $A$, while for $B$ we have $\Delta_{n}^{-1} \tau^{4}  \Delta_{n}^{-1}$ and the reversed braid of $\Delta_{n}^{-1} \psi^{4}  \Delta_{n}^{-1}$.
With the assumption $\tau=\psi$ we get $\Delta_{n}^{-1} \tau^{4} \Delta_{n}^{-1}$ on both sides of $A$, whereas for $B$ we have the same braid on one side and the reversed braid on the other side.

A paper of Garside \cite{G} tells us that the operation of reversing a braid is the antihomomorphism of the braid group $Rev \colon B_{n}\rightarrow B_{n}$ which sends $\sigma_{i_{1}}\sigma_{i_{2}}\cdots \sigma_{i_{r}}$ into the braid $\sigma_{i_{r}}\sigma_{i_{r-1}}\cdots \sigma_{i_{1}}$. He proves that $Rev(\Delta)$ is equivalent to $\Delta$ into the braid group; for this reason, it is enough to assume $\tau=\textrm{Rev} (\tau)$ in order to have an equivalent lift for $A$ and $B$.
An easy example of reversible braids are palindromic ones (see \cite{DGKT} for details).

We can make some more assumptions on $\tau$ in order to handle a smaller family of links with known number of components. Let $i>0$ and $j\geq 0$ be two integer numbers and let $\tau=\Delta_{i}^{j}$, denote with $A_{i,j}$ and $B_{i,j}$ the correspondent links. The considered braid produces a pattern that is a torus link, that is to say, $A_{i,j}$ and $B_{i,j}$ are cables of $L_{A}$ and $L_{B}$. The family of these links has different behaviors for different values of $i$ and $j$:
\begin{description}

\item[for $i=1$, for all $j$:] we have $A_{1,j}=L_{A}$ and $B_{1,j}=L_{B}$;

\item[for all even $i$, for $j=0$:] the link $A_{i,0}$ and $B_{i,0}$ are equivalent (it is an easy exercise using generalized Reidemeister moves);

\item[for all odd $i$, for $j=0$:] the links $A_{i,0}$ and $B_{i,0}$ have respectively $n=i$ and $n=i+1$ components, hence they are an infinite family of different links with equivalent lift;

\item[for all odd $i>1$ or for all odd $j>0$:] the links $A_{i,j}$ and $B_{i,j}$ have a different number of components, hence they are an infinite family of different links with equivalent lift;

\item[for all even $i>1$ and for all even $j>0$:] the links $A_{i,j}$ and $B_{i,j}$ have the same number of components $n=i$, moreover each of these components has the same homology class $\delta=2$; the smaller case, $A_{2,2}$ and $B_{2,2}$ is illustrated in Figure \ref{CEAB}; we cannot prove that all the pairs of links in this family are different, anyway the below computation of the Alexander polynomials of $A_{2,2}$ and $B_{2,2}$ says that the first case consists of different links.
\end{description}

We follow \cite{CMM} for the computation of several geometric invariants of $A_{2,2}$ and $B_{2,2}$; the results are summed up in Table \ref{A3}. The letter $\nu$ denote the number of components, $A^{1}(t)$ the Alexander polynomial and $A^{-1}(t)$ the twisted Alexander polynomial. It is necessary to consider oriented links for the computation of these polynomials: we choose the orientations (shown in Table \ref{A3}) that make the corresponding oriented lifts equivalent.
 
\begin{figure}[h!]                      
\begin{center}                        
\includegraphics[width=10cm]{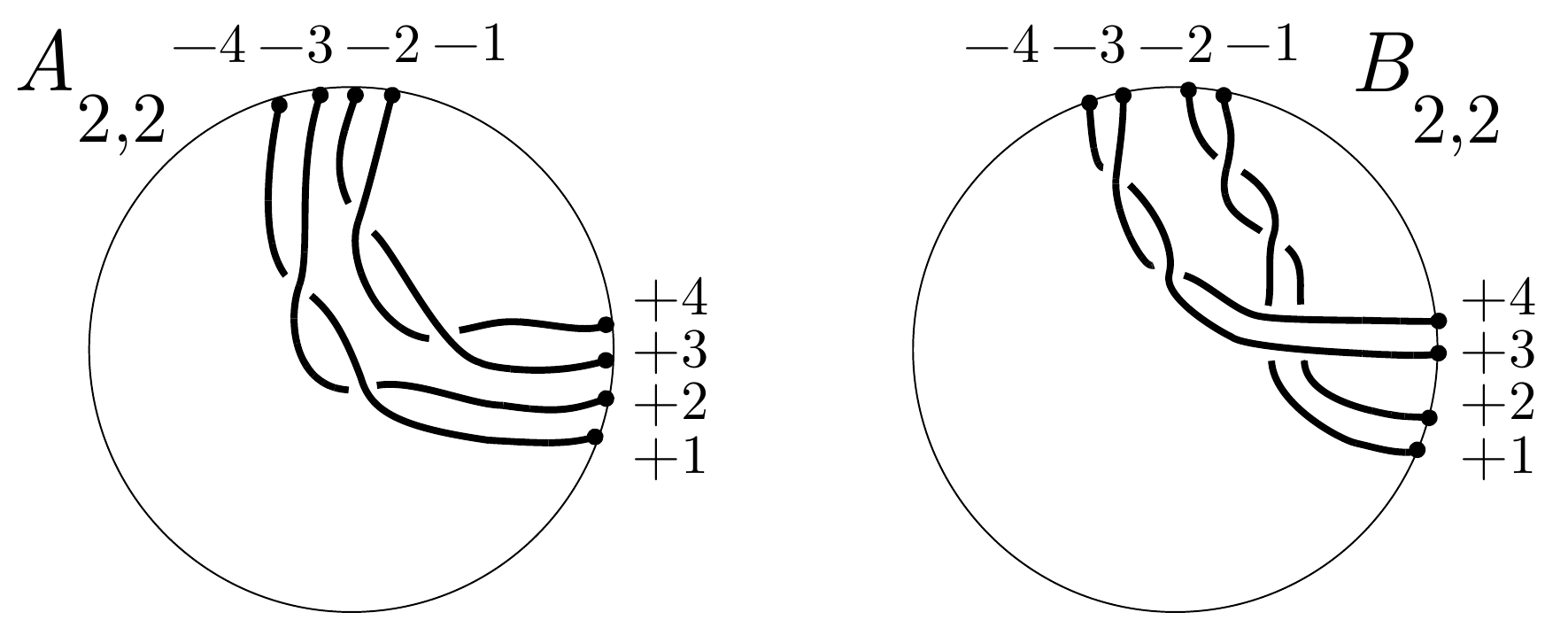}
\caption[legenda elenco figure]{Two different links with equivalent lift in $L(4,1)$.}\label{CEAB}
\end{center}
\end{figure}

\begin{table}[h!]
\begin{center}
\begin{tabular}{|c|c|c|}
\hline 
& $A_{2,2}$ & $B_{2,2}$ \\
& \includegraphics[width=3.5cm]{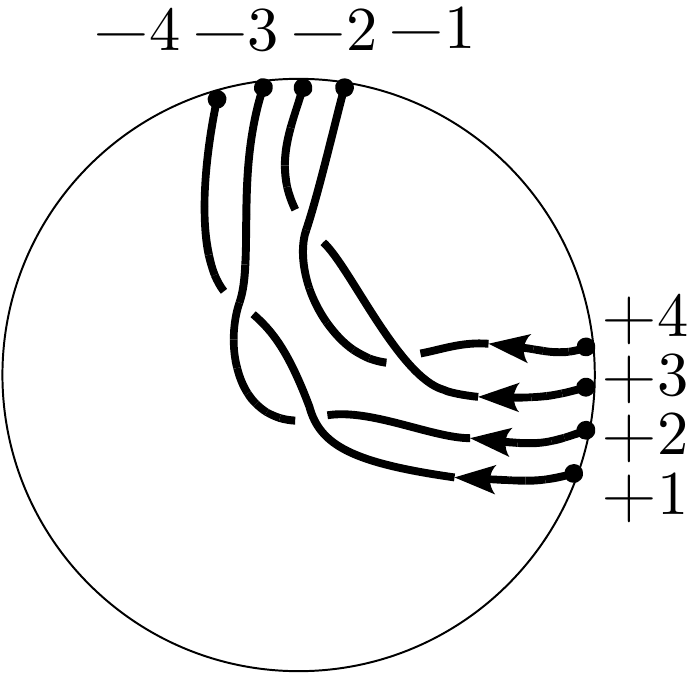} & \includegraphics[width=3.5cm]{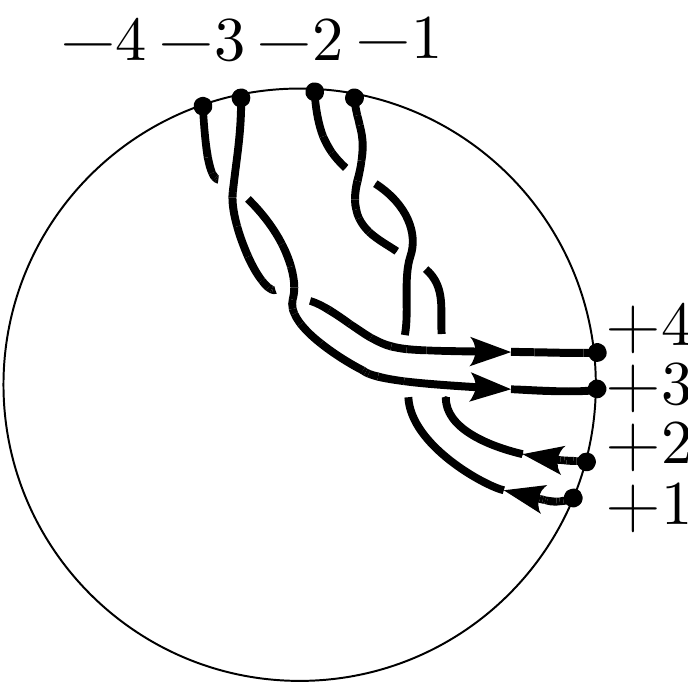} \\
\hline
$\nu$ & $2$ & $2$ \\
\hline
$[L_{i}]\subset H_{1}(L(p,q)$ & $2,2$ & $2,2$ \\
\hline
$H_{1}(L(p,q) \smallsetminus L)$ & $\mathbb{Z}\oplus \mathbb{Z}\oplus \mathbb{Z}_{2}$ & $\mathbb{Z}\oplus \mathbb{Z}\oplus \mathbb{Z}_{2}$  \\
\hline
$\bar{A}^{1}(t)$ & $t^7+t^6-t-1$ & $t^7 -t^6 +t^5 - t^4 + t^3-t^2 +t -1$ \\
$\bar{A}^{-1}(t)$ & $t^6+1 $ & $ t^6+t^4+t^2+1$ \\
 \hline
\end{tabular}
\end{center}
\caption[legenda elenco figure]{Geometric invariants of $A_{2,2}$ and $B_{2,2}$ in $L(4,1)$.}
\label{A3}
\end{table}

\end{ese}

Examples \ref{CE1}, \ref{CE} and \ref{CE3} provide different links with equivalent lift. Using this counterexamples we can produce some other infinite families of links  in the corresponding $L(p,q)$ with equivalent lift, by adding to them the same links in $\s3$ using the disjoint union and the connected sum.

Unfortunately, we have not been able to find counter-examples for all lens spaces, so we still have questions such as:
\begin{itemize}
\item is the lift a complete invariant for links in some fixed lens space, for example in the projective space?
\item is the lift a complete invariant if we restrict to primitive-homologous prime knots in $L(p,q)$ with lift different from the trivial knot?
\end{itemize}

\subsection{The case of oriented and diffeomorphic links}

Up to this stage we have considered unoriented links. Yet this lift problem can be referred also to oriented links. The answer is slightly different. Of course, we can orient the previous counter-examples and find new ones for oriented links in lens spaces.
Moreover we can consider the following property: if we take an oriented knot $K \subset L(p,q)$ such that $\widetilde K$ is invertible (i.e. it is equivalent to the knot with reversed orientations), then also the knot $-K \subset L(p,q)$ with reversed orientation has the same lift. Usually $-K$ is not equivalent to $K$ because the homology class changes. 
For links something similar happens, but you have to be careful to the orientation of each component.

Furthermore we can consider oriented links up to diffeomorphism of pairs, that is to say, two links $L_1$ and $L_2$ are equivalent in $L(p,q)$ if and only if there exists a diffeomorphism $h$ of $L(p,q)$ such that $h(L_1)=L_2$. In this case we have to examine also the following theorem of Sakuma, also proved by Boileau and Flapan about freely periodic knots.
Let $K$ be a knot in the $3$-sphere; if $\textrm{Diff}^{*}(\s3,K)$ is the group of diffeomorphisms of the pair $(\s3,K)$ which preserve the orientation of both $\s3$ and $K$, then a symmetry of a knot $K$ in $\s3$ is a finite subgroup of $\textrm{Diff}^{*}(\s3,K)$ up to conjugation.

\begin{teo}{\upshape{\cite{Sa,BF}}}
Suppose that a knot $K\subset \s3$ has free period $p$. Then there is a unique symmetry of $K$ realizing it, provided that (i) K is prime, or (ii) K is composite and the slope is specified.
\end{teo}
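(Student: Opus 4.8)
The strategy is geometric, resting on the characteristic torus decomposition (JSJ) of knot exteriors together with Thurston's geometrization for Haken manifolds, exactly as in the original arguments of Sakuma and of Boileau--Flapan. Let $K\subset\s3$ have free period $p$, realized by a freely periodic transformation $\varphi$ of order $p$ with $\varphi(K)=K$; the quotient is a knot $\widetilde K$ in the lens space $L(p,q)$ whose lift is $K$, so we are asking whether the conjugacy class of the cyclic group $\langle\varphi\rangle$ inside $\textrm{Diff}^{*}(\s3,K)$ is determined by $p$ (plus the slope data in the composite case). First I would pass to the exterior $E(K)=\s3\smallsetminus \mathring N(K)$, a compact irreducible $3$-manifold with torus boundary on which $\varphi$ acts freely (it acts freely on $\s3$ and preserves $K$, hence acts on $E(K)$, and freeness on the boundary torus follows because a periodic map of a torus with a fixed point is not free on a neighbourhood). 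The question becomes: classify free, orientation-preserving $\mathbb Z_p$-actions on $E(K)$ up to conjugacy, with prescribed behaviour on $\partial E(K)$ recording the slope.

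The key steps are then: (1) By equivariant geometrization, the JSJ decomposition of $E(K)$ is $\varphi$-equivariant, so $\varphi$ permutes the JSJ pieces and the action is assembled from actions on hyperbolic and Seifert-fibered pieces glued along invariant tori. (2) On a hyperbolic piece, Mostow rigidity forces any finite group action to be conjugate into the isometry group, and the isometry group of a finite-volume hyperbolic $3$-manifold is finite; combined with the fact that a free $\mathbb Z_p$-action on such a piece is determined up to conjugacy by its image in $\mathrm{Out}(\pi_1)$, uniqueness on each hyperbolic piece follows. (3) On a Seifert piece one uses the characteristic fibration: up to isotopy $\varphi$ respects the Seifert structure and induces an action on the base orbifold; free $\mathbb Z_p$-actions on Seifert fibered spaces over a surface with boundary are classified (they come from fiberwise rotation combined with a free action downstairs), again giving uniqueness once the boundary data is fixed. (4) Finally one must check the gluing: two equivariant pieces that agree on their boundary tori can be assembled in essentially one way because a free $\mathbb Z_p$-action on $T^2$ is rigid up to conjugacy. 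The primality hypothesis in (i) is what guarantees the JSJ graph is a point or has a canonical root, eliminating the ambiguity of how the periodic map permutes a collar of summands; in the composite case (ii) this permutation ambiguity is exactly what pinning down the slope removes, since the free period of a connected sum is constrained by the free periods and slopes of the factors (this is the decomposition result for free periods of composite knots that Boileau--Flapan analyze).

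The main obstacle I expect is step (3) together with the composite analysis: controlling free periodic symmetries on Seifert-fibered pieces whose base orbifold has symmetry of its own, and — in the composite case — showing that once the slope (the $\mathbb Q/\mathbb Z$-valued invariant measuring how $\varphi$ cyclically permutes and twists the prime summands) is fixed, there is genuinely no remaining choice. This is where one must rule out that two inequivalent "distributions" of the period among the summands could both be realized; the argument here is essentially the bookkeeping of how a $\mathbb Z_p$-action can permute $n$ copies of a prime knot summand in a cyclic orbit while acting on the connecting annuli, and the slope is the obstruction class for that bookkeeping. I would present the prime case in full first, then reduce the composite case to it by cutting along the essential annuli/spheres in the equivariant JSJ decomposition and inducting on the number of prime factors.
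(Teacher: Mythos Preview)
The paper does not prove this theorem at all: it is quoted verbatim from Sakuma \cite{Sa} and Boileau--Flapan \cite{BF} and used as a black box, so there is no ``paper's own proof'' to compare your proposal against. Your outline is therefore not competing with anything in the present manuscript; it is a sketch of the original arguments in the cited references.

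That said, your plan is broadly faithful to how Sakuma and Boileau--Flapan actually proceed: they pass to the knot exterior, use the equivariant torus/JSJ decomposition, apply Thurston's geometrization for Haken manifolds to make the action geometric on each piece, invoke Mostow rigidity on the hyperbolic pieces, analyse the Seifert-fibered pieces via their fibration, and then glue. One point to tighten: in step (4) you assert that a free $\mathbb{Z}_p$-action on $T^2$ is ``rigid up to conjugacy'', but the gluing step is more delicate than that --- the ambiguity comes from the mapping class group of the torus and from how the two pieces are matched, not just from the action on a single torus; the original papers spend real effort here. Also, in the composite case the decomposition one cuts along is by swallow--follow tori (giving the prime factorisation), not by essential spheres or annuli as you wrote; spheres do not survive in the irreducible exterior $E(K)$. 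These are genuine technical points you would have to address if you were actually writing the proof, but for the purposes of this paper the theorem is simply imported and no argument is required.
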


If we translate it into the language of knots in lens spaces, we have that the specification of the slope is equivalent to fixing the $q$ of the lens space. As a consequence, two primitive-homologous knots $K_1$ and $K_2$ in $L(p,q)$ with equivalent non-trivial lift are necessarily equivalent in $L(p,q)$. From the group of diffeotopies of $L(p,q)$ displayed in \cite{Bo} and \cite{HR}, we know that a diffeomorphism in $L(p,q)$ does not always induce an ambient isotopy of knots, so this does not provide a complete answer about the equivalence of $K_1$ and $K_2$ up to ambient isotopy.

\vspace{5mm}
\textit{Acknowledgments:} This research has been fostered during my foreign research period at Chelyabinsk State University, under the supervision of Sergey Matveev. The author is grateful to him and to all the Computational Topology and Algebra Department for hospitality and helpful discussions.
The author would also like to thank Alessia Cattabriga and Michele Mulazzani for the revision of this work.

\end{section}


\vspace{15 pt} {ENRICO MANFREDI, Department of Mathematics,
University of Bologna, ITALY. E-mail: enrico.manfredi3@unibo.it}

\end{document}